\numberwithin{equation}{section}
\numberwithin{figure}{section}
\theoremstyle{plain}
\newtheorem{thm}{\protect\theoremname}[section]
  \theoremstyle{plain}
  \newtheorem{cor}[thm]{\protect\corollaryname}
  \theoremstyle{definition}
  \newtheorem{defn}[thm]{\protect\definitionname}
  \theoremstyle{remark}
  \newtheorem{rem}[thm]{\protect\remarkname}
  \theoremstyle{plain}
  \newtheorem{lem}[thm]{\protect\lemmaname}
  \theoremstyle{plain}
  \newtheorem{prop}[thm]{\protect\propositionname}
  \theoremstyle{definition}
  \newtheorem{example}[thm]{\protect\examplename}
  \providecommand{\corollaryname}{Corollary}
  \providecommand{\definitionname}{Definition}
  \providecommand{\examplename}{Example}
  \providecommand{\lemmaname}{Lemma}
  \providecommand{\propositionname}{Proposition}
  \providecommand{\remarkname}{Remark}
\providecommand{\theoremname}{Theorem}
\begin{document}
\global\long\def\Fd{F_{d}^{2}}

\global\long\def\Ad{A_{d}}

\global\long\def\AI{\mathcal{A}_{\mathcal{I}}}

\global\long\def\AId{(\mathcal{A}_{\mathcal{I}})^{**}}

\global\long\def\Ld{F_{d}^{\infty}}

\global\long\def\Ldd{(F_{d}^{\infty})^{**}}

\global\long\def\Ldz{F_{d,0}^{\infty}}

\title[A non-self-adjoint Lebesgue decomposition]{A non-self-adjoint Lebesgue decomposition }

\author{Matthew Kennedy}

\address{School of Mathematics and Statistics, Carleton University, 1125 Colonel
By Drive, Ottawa, Ontario K1S 5B6, Canada}

\email{mkennedy@math.carleton.ca}

\author{Dilian Yang}

\address{Department of Mathematics and Statistics, University of Windsor,
401 Sunset Avenue, Windsor, Ontario N9B 3P4, Canada}

\email{dyang@uwindsor.ca}
\begin{abstract}
We study the structure of bounded linear functionals on a class of
non-self-adjoint operator algebras that includes the multiplier algebra
of every complete Nevanlinna-Pick space, and in particular the multiplier
algebra of the Drury-Arveson space. Our main result is a Lebesgue
decomposition expressing every linear functional as the sum of an
absolutely continuous (i.e. weak-{*} continuous) linear functional,
and a singular linear functional that is far from being absolutely
continuous. This is a non-self-adjoint analogue of Takesaki's decomposition
theorem for linear functionals on von Neumann algebras. We apply our
decomposition theorem to prove that the predual of every algebra in
this class is (strongly) unique. 
\end{abstract}

\subjclass[2000]{47L50, 46B04, 47L55, 47B32}

\thanks{Both authors partially supported by NSERC}

\maketitle

\section{Introduction}

The main result in this paper is a decomposition theorem for bounded
linear functionals on a class of operator algebras that includes the
multiplier algebra of every complete Nevanlinna-Pick space. Results
of this kind can be seen as a noncommutative generalization of the
Yosida-Hewitt decomposition of a measure into completely additive
and purely finitely additive parts, or more classically, the Lebesgue
decomposition of a measure into absolutely continous and singular
parts. 

Takesaki proved in \cite{Tak58} that a bounded linear functional
on a von Neumann algebra can be decomposed uniquely into the sum of
a normal (i.e. weak-{*} continuous) linear functional, and a singular
linear functional that is far from being normal. In \cite{And78},
Ando proved a direct analogue of Takesaki's decomposition theorem
for linear functionals on the algebra $H^{\infty}$, of bounded analytic
functions on the complex unit disk $\mathbb{D}$. More recently, in
\cite{Ued09}, Ueda proved a generalization of Ando's result for finite
maximal subdiagonal algebras, which are ``analytic'' subalgebras of
finite von Neumann algebras introduced by Arveson in \cite{Arv67}
as a noncommutative generalization of the algebra $H^{\infty}$.

A compelling case can be made that the natural function-theoretic
generalization of $H^{\infty}$ is the algebra $H_{d}^{\infty}$ of
multipliers on the Drury-Arveson space $H_{d}^{2}$. The algebra $H_{d}^{\infty}$
is contained in the algebra $H^{\infty}(\mathbb{B}_{d})$ of bounded
analytic functions on the complex unit ball $\mathbb{B}_{d}$ of $\mathbb{C}^{d}$,
but for $d\geq2$ this inclusion is proper, and $H_{d}^{\infty}$
is seemingly much more tractable than $H^{\infty}(\mathbb{B}_{d})$
(see for example \cite{Arv98}) . The Drury-Arveson space $H_{d}^{2}$
and the multiplier algebra $H_{d}^{\infty}$ are universal in the
following sense: Every irreducible complete Nevanlinna-Pick space
embeds into $H_{d}^{2}$, and the corresponding multiplier algebra
arises as the compression of $H_{d}^{\infty}$ onto this embedding
(see \cite{AM00} for details). Examples of complete Nevanlinna-Pick
spaces include the Hardy space and the Dirichlet space on the disk,
the Drury-Arveson space itself, and more generally the class of Besov-Sobolev
spaces on $\mathbb{B}_{d}$. 

One explanation for the tractability of $H_{d}^{\infty}$ is the fact
that $H_{d}^{\infty}$ arises as a quotient of the noncommutative
analytic Toeplitz algebra $F_{d}^{\infty}$ (see for example \cite{DP98b}
and \cite{AP00}). This algebra, introduced by Popescu in \cite{Pop89},
can be viewed as an algebra of noncommutative analytic functions acting
by left multiplication on a Hardy space $\Fd$ of noncommutative analytic
functions. The operator-algebraic structure of $\Ld$, which is now
well understood, turns out to be strikingly similar to that of $H^{\infty}$
(see for example \cite{Pop89,Pop95,AP00} and \cite{DP98a,DP98b,DP99}).

For a weak-{*} closed two-sided ideal $\mathcal{I}$ of $\Ld$, we
let $\AI$ denote the algebra $\AI=\Ld/\mathcal{I}$. These algebras
are the main objects of interest in this paper, for the following
reason: The multiplier algebra of every irreducible complete Nevanlinna-Pick
space arises as the compression of $\Ld$ to a coinvariant subspace,
and this compression is completely isometrically isomorphic and weak-{*}
to weak-{*} homeomorphic to a quotient of $\Ld$ by a two-sided ideal
(see \cite{DP98b,AP00} for details).

Our main result is the following decomposition theorem for linear
functionals on quotients of $\Ld$. A functional is said to be absolutely
continuous if it is weak-{*} continuous, and singular if it is, roughly
speaking, far from being weak-{*} continuous (we give a precise definition
below).

\begin{thm}
[Lebesgue decomposition for quotients of $F_d^\infty$]Let $\mathcal{I}$
be a weak-{*} closed two-sided ideal of $\Ld$, and let $\phi$ be
a bounded linear functional on $\AI$. Then there are unique linear
functionals $\phi_{a}$ and $\phi_{s}$ on $\AI$ such that $\phi=\phi_{a}+\phi_{s}$,
where $\phi_{a}$ is absolutely continuous and $\phi_{s}$ is singular,
and such that 
\[
\|\phi\|\leq\|\phi_{a}\|+\|\phi_{s}\|\leq\sqrt{2}\|\phi\|.
\]
If $d=1$, then the constant $\sqrt{2}$ can be replaced with the
constant $1$. Moreover, these constants are optimal.
\end{thm}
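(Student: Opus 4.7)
The plan is to adapt the strategies of Takesaki, Ando, and Ueda (in the settings of von Neumann algebras, $H^\infty$, and finite maximal subdiagonal algebras respectively) to the dual operator algebra $\AI$. The first step is a reduction to $\mathcal{I} = \{0\}$: the quotient map $\pi : \Ld \to \AI$ is weak-*-continuous with weak-*-closed kernel $\mathcal{I}$, so pulling back $\phi \in \AI^*$ to $\tilde\phi = \phi \circ \pi \in \Ld^*$ preserves the norm. Given a decomposition $\tilde\phi = \tilde\phi_a + \tilde\phi_s$ on $\Ld$, one must check that both pieces annihilate $\mathcal{I}$, so that the decomposition descends to $\AI$; this should follow from the intrinsic definition of singular (a singular functional which is also the restriction of an absolutely continuous functional to a weak-*-closed ideal must vanish).

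The heart of the argument is the construction of the decomposition on $\Ld$ itself. The natural approach uses the Arens product on the bidual $\Ldd$. Through the canonical embedding $\Ld \hookrightarrow \Ldd$, the predual $(\Ld)_* \subseteq \Ld^*$ induces a norm-one projection $\rho : \Ldd \to \Ld$ by restriction to $(\Ld)_*$. The goal is to produce an idempotent $p \in \Ldd$ such that, after the canonical identifications, the map $x \mapsto \rho(\hat x \cdot p)$ yields the absolutely continuous component, and the residual $\phi_s(x) = \phi(x) - \phi(\rho(\hat x \cdot p))$ is singular. Equivalently, one constructs $\phi_a$ as the element of $(\AI)_* \subset \AI^*$ extremal for a suitable variational problem on extensions of $\phi$ to $\AI^{**}$, a strategy modeled on Ueda's optimization argument.

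Uniqueness follows from the observation that the difference of two decompositions is simultaneously absolutely continuous and singular; the definition of singular (announced as being given precisely later) should be arranged so that the only such functional is zero. For the norm bounds, the lower bound is the triangle inequality. When $d=1$, the algebra $\Ld$ essentially coincides with $H^\infty(\mathbb{D})$, so Ando's theorem yields the sharp constant $1$. For $d \geq 2$, the upper bound $\sqrt{2}$ should come from a Pythagorean-type inequality
\[
\|\phi_a\|^2 + \|\phi_s\|^2 \leq \|\phi\|^2,
\]
which, combined with the quadratic--arithmetic mean inequality, gives $\|\phi_a\| + \|\phi_s\| \leq \sqrt{2}\,\|\phi\|$. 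Such an estimate reflects the fact that for $d \geq 2$ the absolutely continuous and singular parts act on ``orthogonal'' pieces of the Fock space $\Fd$, owing to the orthogonality of the ranges of the generators $L_1, \dots, L_d$. Optimality of the constants should be demonstrated by explicit examples: for $\sqrt{2}$, a suitable combination of a vector state against the vacuum and a functional concentrated on a wandering vector of one of the $L_i$ should saturate the bound.

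The principal obstacle is twofold. First, one must prove that the candidate singular part is genuinely singular in the intrinsic sense, which requires exploiting concrete structural features of $\Ld$ as a free semigroup algebra rather than general Banach-algebraic arguments. Second, one must show the Pythagorean estimate with the precise constant $\sqrt{2}$ and simultaneously exhibit functionals attaining it, thereby certifying that the noncommutative obstruction to Ando's sharper bound cannot be removed for $d \geq 2$.
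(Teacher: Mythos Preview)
Your overall architecture matches the paper's: lift $\phi$ to $\Ld$, decompose there via a projection in the bidual, and use an F.\ \& M.\ Riesz-type statement to push both pieces back down to $\AI$. However, the proposal leaves unidentified precisely the object on which everything hinges. The idempotent $p\in\Ldd$ is not produced by a Ueda-style variational argument or by abstract Arens-product considerations; it is the \emph{structure projection} $P_u$ coming from the Davidson--Katsoulis--Pitts structure theorem for free semigroup algebras, applied to the isometric tuple $(\pi_u(L_{Z_1}),\ldots,\pi_u(L_{Z_d}))$ in the universal representation. The paper's Propositions~\ref{prop:char-singular} and~\ref{prop:char-abs-cont} show that $\phi$ is singular iff $\phi=\phi P_u$ and absolutely continuous iff $\phi=\phi P_u^{\perp}$; these characterizations are what make the decomposition $\phi_a=\phi P_u^{\perp}$, $\phi_s=\phi P_u$ work and what make the F.\ \& M.\ Riesz step a one-liner (since $P_u\in\Ldd$, one has $\pi_u(A)P_u^{\perp}$ in the weak-$*$ closure of $\pi_u(\mathcal I)$ whenever $A\in\mathcal I$). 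Without naming $P_u$ and invoking the structure theorem, you have no mechanism for either step.

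Your account of the constants is also inverted. The Pythagorean bound $\|\phi_a\|^2+\|\phi_s\|^2\le\|\phi\|^2$ does hold, but its source is that the representing vector $x\in H_u$ splits orthogonally as $P_u x\oplus P_u^{\perp}x$ in the \emph{universal} Hilbert space, not in $\Fd$; one then applies Cauchy--Schwarz to $\|P_u x\|+\|P_u^{\perp}x\|\le\sqrt{2}\,\|x\|$. The reason $d=1$ improves to constant $1$ is exactly the \emph{opposite} of what you wrote: for $d=1$ the projection $P_u$ is reducing (no dilation-type summand), so $\Ldd$ splits as an $\ell^\infty$-direct sum and the dual norm becomes additive. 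For $d\ge2$ the presence of dilation-type summands means $P_u$ is only coinvariant, and this one-sidedness is precisely what costs the factor $\sqrt{2}$. Correspondingly, the optimal example (the paper's Example~\ref{ex:constant}) is built from a genuine dilation-type isometric tuple, not from a vacuum-plus-wandering-vector combination; your proposed example would live entirely in the analytic part and could not saturate the bound.
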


The following result for multiplier algebras of complete Nevanlinna-Pick
spaces is an immediate consequence of Theorem 1.1.
\begin{cor}
[Lebesgue decomposition for multiplier algebras]Let $\mathcal{A}$
be the multiplier algebra of a complete Nevanlinna-Pick space, and
let $\phi$ be a bounded linear functional on $\mathcal{A}$. Then
there are unique linear functionals $\phi_{a}$ and $\phi_{s}$ on
$\mathcal{A}$ such that $\phi=\phi_{a}+\phi_{s}$, where $\phi_{a}$
is absolutely continuous and $\phi_{s}$ is singular, and such that
\[
\|\phi\|\leq\|\phi_{a}\|+\|\phi_{s}\|\leq\sqrt{2}\|\phi\|.
\]

\end{cor}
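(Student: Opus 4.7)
The plan is to invoke Theorem 1.1 directly, after identifying the multiplier algebra $\mathcal{A}$ with a quotient of $\Ld$. By the universality result of Agler and McCarthy, every irreducible complete Nevanlinna-Pick space embeds isometrically into $H_d^2$ for some (possibly infinite) $d$, and the corresponding multiplier algebra arises as the compression of $H_d^\infty$ to the associated coinvariant subspace. Combined with the fact that $H_d^\infty$ itself is a weak-*-continuous quotient of $\Ld$, the results cited in the introduction from \cite{DP98b,AP00} show that this multiplier algebra is completely isometrically isomorphic and weak-*-to-weak-*-homeomorphic to $\AI=\Ld/\mathcal{I}$ for some weak-* closed two-sided ideal $\mathcal{I}\subset\Ld$. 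A general complete Nevanlinna-Pick space decomposes into irreducible summands, and a Lebesgue decomposition can be carried out summand by summand without loss in the norm estimate, so I may assume $\mathcal{A}$ is irreducible.

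Fix a completely isometric algebra isomorphism $\Phi:\mathcal{A}\to\AI$ which is also a weak-*-homeomorphism. The adjoint $\Phi^{*}:(\AI)^{*}\to\mathcal{A}^{*}$ is then an isometric bijection of dual spaces. Because $\Phi$ is a weak-*-homeomorphism, $\Phi^{*}$ maps weak-*-continuous (i.e.\ absolutely continuous) functionals bijectively onto absolutely continuous functionals. Since the notion of singular functional is defined intrinsically in terms of the algebra structure and the weak-* topology, $\Phi^{*}$ likewise identifies the singular functionals on $\AI$ with those on $\mathcal{A}$.

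Given $\phi\in\mathcal{A}^{*}$, I would set $\psi=(\Phi^{*})^{-1}(\phi)$, apply Theorem 1.1 to produce the unique decomposition $\psi=\psi_{a}+\psi_{s}$ on $\AI$, and then define $\phi_{a}=\Phi^{*}(\psi_{a})$ and $\phi_{s}=\Phi^{*}(\psi_{s})$. Uniqueness on $\mathcal{A}$ follows from uniqueness on $\AI$ via the bijection $\Phi^{*}$, and the estimate
\[
\|\phi\|\leq\|\phi_{a}\|+\|\phi_{s}\|\leq\sqrt{2}\|\phi\|
\]
transports from Theorem 1.1 because $\Phi^{*}$ is an isometry.

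The only step that genuinely requires verification is that the definition of singular functional transfers correctly under $\Phi^{*}$, and I expect this to be the main (though still routine) obstacle: one must check that the definition given later in the paper really is expressed solely in terms of the algebra structure and the weak-* topology, so that it is automatically preserved by any completely isometric weak-*-homeomorphism. Everything else is bookkeeping.
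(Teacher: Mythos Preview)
Your approach is exactly the paper's: the corollary is stated as an immediate consequence of Theorem~1.1 (equivalently Theorem~\ref{thm:lebesgue-decomp-quotients}), via the identification of the multiplier algebra with a quotient $\AI$ coming from \cite{AM00,DP98b,AP00}; the paper gives no separate proof. Your concern about whether ``singular'' transports under $\Phi^{*}$ is moot, since the paper only defines singular functionals on algebras of the form $\AI$ (Definition~5.1), so for $\mathcal{A}$ the notion is \emph{by definition} the one pulled back along the isomorphism; your additional remark about reducing to the irreducible case is a detail the paper leaves implicit.
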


We first prove that Theorem 1.1 holds for $\Ld$. The proof for quotients
of $\Ld$ requires the following generalization of the classical F.
\& M. Riesz theorem, which is similar in spirit to the noncommutative
F. \& M. Riesz-type theorems proved by Exel in \cite{Exe90} for operator
algebras with the Dirichlet property, and by Blecher and Labuschagne
in \cite{BL07} for maximal subdiagonal algebras.

\begin{thm}
[Extended F. \& M. Riesz Theorem]Let $\phi$ be a bounded linear
functional on $\Ld$, and let $\phi=\phi_{a}+\phi_{s}$ be the Lebesgue
decomposition of $\phi$ into absolutely continuous and singular parts
as in Theorem 1.1. Let $\mathcal{I}$ be a weak-{*} closed two-sided
ideal of $\Ld$. If $\phi$ is zero on $\mathcal{I}$, then $\phi_{a}$
and $\phi_{s}$ are both zero on $\mathcal{I}$.
\end{thm}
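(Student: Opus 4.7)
The approach I would take is to exploit the $A$-bimodule structure on $A^{*}$, where $A:=\Ld$, and show that the Lebesgue decomposition is compatible with it. For $\phi\in A^{*}$ and $x\in A$, set $(\phi\cdot x)(b):=\phi(xb)$. Since $\mathcal{I}$ is a two-sided ideal, the hypothesis $\phi|_{\mathcal{I}}=0$ immediately implies that $\phi\cdot x=0$ for every $x\in\mathcal{I}$, because $xb\in\mathcal{I}$ for all $b\in A$. The key reduction is then the claim that the Lebesgue decomposition commutes with this right module action: $(\phi\cdot x)_{a}=\phi_{a}\cdot x$ and $(\phi\cdot x)_{s}=\phi_{s}\cdot x$ for every $x\in A$. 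Granting this, $\phi\cdot x=0$ forces $\phi_{a}\cdot x=\phi_{s}\cdot x=0$ by the uniqueness part of Theorem~1.1, and evaluating at $1_{A}$ gives $\phi_{a}(x)=\phi_{s}(x)=0$ for each $x\in\mathcal{I}$, which is the desired conclusion.

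By uniqueness of the decomposition, the compatibility claim reduces to verifying, for each $x\in A$, that $\phi_{a}\cdot x$ is absolutely continuous and $\phi_{s}\cdot x$ is singular. The first assertion is immediate: right multiplication by $x$ is separately $w^{*}$-continuous on the dual operator algebra $\Ld$, so $w^{*}$-continuity of $\phi_{a}$ passes to $\phi_{a}\cdot x$. The substantive content of the theorem is therefore the invariance of the cone of singular functionals under the right $A$-module action.

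The main obstacle is thus the claim that $\phi_{s}\cdot x$ is singular whenever $\phi_{s}$ is. One clean way to handle this is to pass to the second dual: if the Lebesgue decomposition from Theorem~1.1 is realized by a central idempotent $z\in A^{**}$, so that $\phi_{a}(b)=\tilde{\phi}(zb)$ and $\phi_{s}(b)=\tilde{\phi}((1-z)b)$ (where $\tilde{\phi}$ is the canonical $w^{*}$-continuous extension of $\phi$ to $A^{**}$), then $(\phi_{s}\cdot x)(b)=\tilde{\phi}((1-z)xb)$, and centrality of $z$ gives $(1-z)xb=(1-z)x(1-z)b\pmod{zA^{**}}$, which directly exhibits $\phi_{s}\cdot x$ as singular via the same projection $z$. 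The bulk of the work is therefore in establishing the central-projection picture: one must show that the natural projection from $A^{**}$ (with Arens product) onto $A$ has kernel a two-sided ideal and is implemented by a central idempotent, which requires a careful analysis of the multiplicative structure of the bidual of $\Ld$. Alternatively, one can work directly with a witnessing net $\{e_{\lambda}\}\subset\Ld$ carrying the mass of $\phi_{s}$ and use Popescu-type Ces\`aro approximants to transport singularity through the module action. In either case, once module-invariance of singularity is in hand, the F.\ \& M.\ Riesz theorem follows from the reduction described in the first paragraph.
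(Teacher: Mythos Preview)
Your reduction to module-compatibility of the Lebesgue decomposition is exactly the mechanism behind the paper's proof, but your proposed verification of the key step rests on a false premise. The structure projection $P_u$ that implements the decomposition is \emph{not} central in $(\Ld)^{**}$ for $d\ge 2$: by Remark~\ref{rem:proj-reducing-dim-1} the projection is reducing only when there is no dilation-type summand, and the universal representation certainly contains one; equivalently, Example~\ref{ex:constant} shows $\|\phi_a\|+\|\phi_s\|$ can strictly exceed $\|\phi\|$, which would be impossible if the decomposition came from a central idempotent. So the ``central-projection picture'' you identify as the bulk of the work cannot be established, and approach (a) as written fails.

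The good news is that centrality is unnecessary. In the paper the decomposition is $\phi_a(A)=\tilde\phi(\pi_u(A)P_u^{\perp})$ and $\phi_s(A)=\tilde\phi(\pi_u(A)P_u)$, with $P_u$ multiplying on the \emph{right}, not on the left as in your formulas. Since your module action $(\phi\cdot x)(b)=\phi(xb)$ inserts $x$ on the \emph{left}, the two operations commute for free:
\[
(\phi_s\cdot x)(b)=\tilde\phi\bigl(\pi_u(x)\,\pi_u(b)P_u\bigr)=\widetilde{(\phi\cdot x)}\bigl(\pi_u(b)P_u\bigr)=(\phi\cdot x)_s(b),
\]
using only that $P_u\in(\Ld)^{**}$ and that multiplication in the dual operator algebra $(\Ld)^{**}$ is separately weak-{*} continuous. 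This is precisely the paper's argument, phrased differently: the weak-{*} closure $\mathcal J$ of $\pi_u(\mathcal I)$ is a two-sided ideal of $(\Ld)^{**}$, so $\pi_u(A)P_u^{\perp}\in\mathcal J$ for $A\in\mathcal I$, whence $\phi_a(A)=\tilde\phi(\pi_u(A)P_u^{\perp})=0$. Once you put the projection on the correct side, your outline collapses to this one-line proof and no Ces\`aro or centrality analysis is needed.
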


Grothendieck proved in \cite{Gro55} that $L^{1}$ is the unique predual
of $L^{\infty}$ (up to isometric isomorphism). Soon after, in \cite{Sak56},
Sakai generalized Grothendieck's result by proving that the predual
of every von Neumann algebra is unique. In fact, this latter result
follows from the proof of Sakai's characterization of von Neumann
algebras as $\mathrm{C}^{*}$-algebras which are dual spaces.

The uniqueness of the predual of a von Neumann algebra can also be
proved using Takesaki's decomposition theorem from \cite{Tak58} (see
for example the proof of Corollary 3.9 of \cite{Tak02}). A similar
idea was used by Ando in \cite{And78}, to prove the uniqueness of
the predual of $H^{\infty}$, and more recently, by Ueda in \cite{Ued09},
to prove that the predual of every maximal subdiagonal algebra is
unique.

Inspired by these results, we apply Theorem 1.3 to prove that the
predual of every quotient $\AI$ is (strongly) unique.

\begin{thm}
Let $\mathcal{I}$ be a weak-{*} closed two-sided ideal of $\Ld$.
Then the algebra $\mathcal{A}_{\mathcal{I}}$ has a strongly unique
predual.
\end{thm}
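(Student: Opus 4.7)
The plan is to follow the strategy pioneered by Ando for $H^{\infty}$ and generalized by Ueda for maximal subdiagonal algebras, taking the Lebesgue decomposition of Theorem~1.1 as the main input. Strong uniqueness of the predual means that for any Banach space $Y$ together with an isometric isomorphism $T\colon \AI \to Y^{*}$, the inverse $T^{-1}$ is automatically weak-* to weak-* continuous. Identifying $Y$ with its image in $\AI^{*}$ via the canonical embedding $Y \hookrightarrow Y^{**} \cong \AI^{*}$, this reduces to the subspace identity $Y = (\AI)_{*}$ inside $\AI^{*}$, where $(\AI)_{*}$ denotes the space of absolutely continuous functionals.

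For the inclusion $Y \subseteq (\AI)_{*}$, take $\phi \in Y$ and apply Theorem~1.1 to write $\phi = \phi_{a} + \phi_{s}$; the goal is to show $\phi_{s} = 0$. The idea is to exploit a concrete characterization of singular functionals: any nonzero $\phi_{s}$ should admit a bounded ``witnessing'' net $(a_{\lambda}) \subseteq \AI$ with $a_{\lambda} \to 0$ in the canonical weak-* topology but $|\phi_{s}(a_{\lambda})|$ bounded below. Since the unit ball of $\AI$ is compact in both weak-* topologies (the one from $Y$ and the canonical one), a Hausdorff-compactness argument should force the two topologies to agree on bounded sets, so that $a_{\lambda} \to 0$ in the weak-* topology of $Y$ as well. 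But then $\phi(a_{\lambda}) \to 0$ since $\phi \in Y$, while $\phi(a_{\lambda}) = \phi_{a}(a_{\lambda}) + \phi_{s}(a_{\lambda})$ stays bounded below in absolute value, a contradiction. The reverse inclusion $(\AI)_{*} \subseteq Y$ then follows from standard norming considerations, since both subspaces embed isometrically into $\AI^{*}$ and both realize the norm on $\AI$.

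The principal obstacle is making this compactness-agreement argument rigorous in the absence of an \emph{isometric} Lebesgue decomposition. For $d=1$, Theorem~1.1 gives $\|\phi\| = \|\phi_{a}\| + \|\phi_{s}\|$, so $(\AI)_{*}$ is an $L$-summand in $\AI^{*}$ and strong uniqueness follows immediately from Godefroy's theorem on $L$-embedded preduals. For $d \geq 2$ only the bound $\sqrt{2}\|\phi\|$ is available, so $(\AI)_{*}$ is not literally an $L$-summand, and an additional structural input is required. The natural route is to combine Theorem~1.1 with the extended F. \& M. Riesz theorem (Theorem~1.3) and the multiplicative structure of $\AI$ to show that the projection $\phi \mapsto \phi_{a}$ leaves any predual $Y$ invariant, thereby decomposing $Y$ itself into absolutely continuous and singular parts; the compactness argument then annihilates the singular part of $Y$. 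This stability step, which compensates for the loss of isometry in the decomposition, is the heart of the proof and is where the argument goes beyond the Ando/Ueda setting.
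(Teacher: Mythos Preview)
Your outline has the right global shape, and the reverse inclusion via norming is fine, but the heart of the argument---the step showing $Y\subseteq(\AI)_{*}$---contains a genuine gap. You write that ``a Hausdorff-compactness argument should force the two topologies to agree on bounded sets,'' and then deduce that your witnessing net $a_{\lambda}\to 0$ in $\sigma(\AI,Y)$ as well. But compactness of the unit ball in each of the two topologies $\sigma(\AI,Y)$ and $\sigma(\AI,(\AI)_{*})$ does \emph{not} imply they coincide: that conclusion would require one to be finer than the other, which is equivalent to an inclusion between $Y$ and $(\AI)_{*}$, precisely what you are trying to prove. Your proposed remedy---showing that the projection $\phi\mapsto\phi_{a}$ leaves $Y$ invariant---is just a restatement of the goal, and you give no mechanism for it.

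What the paper actually does is different and more delicate. Rather than comparing the two weak-{*} topologies, it produces a bounded net $(B_{\lambda})$ in $\Ld$ that converges weak-{*} in $(\Ld)^{**}$ to the universal structure projection $P_{u}$ (this comes from the Kaplansky-density result of \cite{DLP05} combined with \cite{Ken11}). The point is that convergence in $\sigma((\Ld)^{**},(\Ld)^{*})$ means exactly that $(B_{\lambda})$ is \emph{weakly} Cauchy in $\Ld$, i.e.\ Cauchy against every functional in $(\Ld)^{*}$, not merely the weak-{*} continuous ones. Passing to $\AI$, the net $(\overline{AB_{\lambda}})$ is therefore Cauchy against every element of $Y$ in particular, and since the unit ball is $\sigma(\AI,Y)$-compact it has a $\sigma(\AI,Y)$-limit $C$. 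One then computes $\phi(C)=\psi_{s}(A)\ne 0$ directly, while the canonical weak-{*} limit of $(\overline{AB_{\lambda}})$ is $0$, so $C=0$ by separation---a contradiction. The crucial idea you are missing is this \emph{weakly Cauchy} (not merely weak-{*} convergent) approximant to $P_{u}$; an arbitrary witnessing net for the failure of weak-{*} continuity of $\phi_{s}$ would not have this property, and without it there is no way to transport convergence from one predual topology to the other.
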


It follows immedately from Theorem 1.4 that the multiplier algebra
of every complete Nevanlinna-Pick space has a unique predual.

\begin{cor}
The multiplier algebra of every complete Nevanlinna-Pick space has
a strongly unique predual.
\end{cor}

In particular, Corollary 1.5 implies that the multiplier algebra $H_{d}^{\infty}$
on the Drury-Arveson space has a unique predual. We believe this result
is especially interesting in light of the fact that, for $d\geq2$,
the uniqueness of the predual of $H^{\infty}(\mathbb{B}_{d})$ is
an open problem.

In addition to this introduction, this paper has five other sections.
In Section 2, we provide a brief review of the requisite background
material. In Section 3, we prove the Lebesgue decomposition for $\Ld$,
and give an example showing that the constant in the statement of
the theorem is optimal. In Section 4, we prove the extended F. \&
M. Riesz Theorem. In Section 5, we prove the Lebesgue decomposition
theorem for quotients of $\Ld$, and hence for multiplier algebras
of complete Nevanlinna-Pick spaces. In Section 6, we use the Lebesgue
decomposition theorem to prove that the predual of every quotient
of $\Ld$ is unique, and hence that the predual of the multiplier
algebra of every complete Nevanlinna-Pick space is unique.

\section{Preliminaries}

\subsection{The noncommutative analytic Toeplitz algebra}

For fixed $1\leq d\leq\infty$, let $\mathbb{C}\langle Z\rangle=\mathbb{C}\langle Z_{1},\ldots,Z_{d}\rangle$
denote the algebra of noncommutative polynomials in the variables
$Z_{1},\ldots,Z_{d}$. As a vector space, $\mathbb{C}\langle Z\rangle$
is spanned by the set of monomials 
\[
\{Z_{w}=Z_{w_{1}}\cdots Z_{w_{n}}\mid w=w_{1}\cdots w_{n}\in\mathbb{F}_{d}^{*},\ n\geq0\},
\]
where $\mathbb{F}_{d}^{*}$ denotes the free semigroup generated by
$\{1,\ldots,d\}$. The noncommutative Hardy space $F_{d}^{2}$ is
the Hilbert space obtained by completing $\mathbb{C}\langle Z\rangle$
in the natural inner product
\[
\langle Z_{w},Z_{w'}\rangle=\delta_{w,w'},\quad w,w'\in\mathbb{F}_{d}^{*}.
\]
Equivalently, $F_{d}^{2}$ is the Hilbert space consisting of noncommutative
power series with square summable coefficients,
\[
F_{d}^{2}=\left\{ \sum_{w\in\mathbb{F}_{d}^{*}}a_{w}Z_{w}\mid\sum_{w\in\mathbb{F}_{d}^{*}}|a_{w}|^{2}<\infty\right\} .
\]
We think of the elements of $F_{d}^{2}$ as noncommutative analytic
functions.

Every element in $F_{d}^{2}$ gives rise to a multiplication operator
on $F_{d}^{2}$ in the following way (note that in this noncommutative
setting, it is necessary to specify whether multiplication occurs
on the left or the right). For $F$ in $\Fd$, the left multiplication
operator $L_{F}$ is defined by
\[
L_{F}G=FG,\quad G\in H_{d}^{2}.
\]
The operator $L_{F}$ is not necessarily bounded in general, simply
because the product of two elements in $F_{d}^{2}$ is not necessarily
contained in $F_{d}^{2}$. However, it is always densely defined on
$\mathbb{C}\langle Z\rangle$.

The noncommutative analytic Toeplitz algebra $\Ld$ is the noncommutative
multiplier algebra of $\Fd$. It consists precisely of the functions
$F$ in $H_{d}^{2}$ such that the corresponding left multiplication
operator is bounded,
\[
F_{d}^{\infty}=\{F\in H_{d}^{2}\mid FG\in H_{d}^{2},\ \forall G\in H_{d}^{2}\}.
\]
Equivalently, if we identity $F$ in $F_{d}^{\infty}$ with the left
multiplication operator $L_{F}$ on the Hilbert space $F_{d}^{2}$,
then $F_{d}^{\infty}$ is obtained as the closure of $\mathbb{C}\langle Z\rangle$
in the weak-{*} topology on $\mathcal{B}(F_{d}^{2})$. The noncommutative
disk algebra $A_{d}$ is the closure of $\mathbb{C}\langle Z\rangle$
in the norm topology. Note that it is properly contained in $\Ld$.

The algebras $\Ad$ and $\Ld$ were introduced by Popescu in \cite{Pop96}
and \cite{Pop95} respectively. For $d=1$, $F_{d}^{2}$ can be identified
with the classical Hardy space $H^{2}$, $F_{d}^{\infty}$ can be
identified with the classical algebra of bounded analytic functions,
and $A_{d}$ can be identified with the classical disk algebra of
functions that are analytic on $\mathbb{D}$ with continuous extensions
to the boundary.

\subsection{The structure of an isometric tuple}
\begin{defn}
Let $V=\left(V_{1},\ldots,V_{d}\right)$ be an isometric tuple. Then\end{defn}
\begin{enumerate}
\item $V$ is a \emph{unilateral shift} if it is unitarily equivalent to
a multiple of $L_{Z}=(L_{Z_{1}},\ldots,L_{Z_{d}})$,
\item $V$ is \emph{absolutely continuous} if the unital weak operator closed
algebra $\mathrm{W}(V_{1},\ldots,V_{d})$ generated by $V_{1},\ldots,V_{d}$
 is algebraically isomorphic to the noncommutative analytic Toeplitz
algebra $\Ld$,
\item $V$ is \emph{singular} if the weakly closed algebra $\mathrm{W}\left(V_{1},\ldots,V_{d}\right)$
is a von Neumann algebra, and
\item $V$ is of \emph{dilation type} if it has no summand that is absolutely
continuous or singular.
\end{enumerate}

The next result is from \cite{Ken12}.
\begin{thm}
[Lebesgue-von Neumann-Wold Decomposition]\label{thm:leb-vn-wold-decomp}Let
$V=\left(V_{1},\ldots,V_{d}\right)$ be an isometric $d$-tuple. Then
$V$ can be decomposed as
\[
V=V_{u}\oplus V_{a}\oplus V_{s}\oplus V_{d},
\]
where $V_{u}$ is a unilateral $d$-shift, $V_{a}$ is an absolutely
continuous unitary $d$-tuple, $V_{s}$ is a singular unitary $d$-tuple
and $V_{d}$ is a unitary $d$-tuple of dilation type.
\end{thm}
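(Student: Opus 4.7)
The plan is to build the decomposition in stages, extracting a maximal reducing subspace of each type in the order listed. First, I apply Popescu's noncommutative Wold decomposition. Let $\mathcal{W} = \mathcal{H} \ominus (V_1 \mathcal{H} + \cdots + V_d \mathcal{H})$ be the wandering subspace, and let $\mathcal{H}_u$ be the smallest reducing subspace of $V$ containing $\mathcal{W}$. Because $V_i^{*} V_j = \delta_{ij} I$, the restriction $V|_{\mathcal{H}_u}$ is unitarily equivalent to a multiple of the left regular tuple $(L_{Z_1}, \ldots, L_{Z_d})$: the vectors $\{V_w \xi : w \in \mathbb{F}_d^{*},\ \xi \in \mathcal{W}\}$ form an orthonormal system mirroring the free monomial basis of $\Fd$. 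This yields the unilateral shift summand $V_u$, and on the complement $\mathcal{H}_u^{\perp}$ the tuple $V$ restricts to a row-unitary tuple $V'$ satisfying $\sum_i V'_i (V'_i)^{*} = I$, i.e., a ``unitary $d$-tuple'' in the sense of the paper.

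Next, I decompose $V'$ using the induced unital homomorphism $\pi \colon \Ld \to \mathcal{B}(\mathcal{H}_u^{\perp})$ sending $L_{Z_i} \mapsto V'_i$, together with its normal extension $\tilde{\pi} \colon \Ldd \to \mathcal{B}(\mathcal{H}_u^{\perp})$. I take $\mathcal{H}_a$ to be the closed linear span of all reducing subspaces of $V'$ on which the restricted weakly closed algebra is algebraically isomorphic to $\Ld$. This family is closed under direct sums, so $\mathcal{H}_a$ is itself reducing and of the stated type, and its restriction gives the absolutely continuous summand $V_a$. Inside $\mathcal{H}_u^{\perp} \ominus \mathcal{H}_a$ I repeat the construction with ``singular'' in place of ``absolutely continuous'' to extract $V_s$ on a subspace $\mathcal{H}_s$. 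What remains is, by construction, a row-unitary tuple with no absolutely continuous or singular summand, yielding the dilation type part $V_d$.

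The main obstacle is justifying that $\mathcal{H}_a$ and $\mathcal{H}_s$ really are reducing subspaces of the claimed type, rather than merely suprema of such. The cleanest route I see is via separating central projections in $\Ldd$: the canonical splitting of $\Ldd$ into its normal part (which carries a distinguished copy of $\Ld$) and its singular complement is implemented by central projections, and pushing these forward through $\tilde{\pi}$ yields central projections in the weakly closed algebra generated by $\tilde{\pi}(\Ldd)$ that cut out the desired summands of $\mathcal{H}_u^{\perp}$. Once this is in hand, the characterization of each piece as absolutely continuous, singular, or dilation type follows from the structure of the corresponding quotient of $\Ldd$, and uniqueness of the full decomposition is immediate from the maximality built into the construction of $\mathcal{H}_u$, $\mathcal{H}_a$, and $\mathcal{H}_s$.
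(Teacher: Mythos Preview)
The paper does not prove this theorem; it simply quotes it from \cite{Ken12}. So there is no ``paper's own proof'' to compare against, and you should be aware that the actual argument in \cite{Ken12} is substantially deeper than what you have sketched: its key ingredient is the existence of wandering vectors for analytic free semigroup algebras (the main theorem of \cite{Ken11}), which is what ultimately lets one separate the absolutely continuous part from the rest.

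Your proposal contains a genuine gap at exactly the point you flag as the ``main obstacle.'' You propose to resolve it by asserting that $\Ldd$ splits into its normal and singular parts via \emph{central} projections, and then pushing these forward through $\tilde\pi$. For $d\ge 2$ this is false. The universal structure projection $P_u\in\Ldd$ is not reducing: Remark~\ref{rem:proj-reducing-dim-1} says explicitly that the structure projection is reducing if and only if there is no dilation-type summand, and Example~\ref{ex:constant} exhibits a functional with $\|\phi_a\|+\|\phi_s\|=\sqrt{2}\,\|\phi\|$, which could not happen if $P_u$ were central (compare the $d=1$ clause of Theorem~\ref{thm:lebesgue-decomp}). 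So the ``cleanest route'' you describe does not exist, and without it you have not shown that the span of all absolutely continuous (respectively singular) reducing subspaces is itself of that type.

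There is also a circularity issue with trying to lean on the structure of $\Ldd$ here. In this paper the relevant facts about $P_u$, in particular Proposition~\ref{prop:char-abs-cont} identifying $\phi P_u^{\perp}$ as weak-$*$ continuous, are proved by invoking ``the main result of \cite{Ken12},'' which is precisely the theorem you are attempting to establish. A self-contained argument therefore cannot route through the Lebesgue decomposition of $(\Ld)^*$ developed in Section~3. Finally, the map $\pi:\Ld\to\mathcal{B}(\mathcal{H}_u^{\perp})$ you write down is not available for an arbitrary row-unitary tuple: Popescu's universal property gives you a representation of $A_d$, and extending it (weak-$*$ continuously) to $\Ld$ is essentially the definition of absolute continuity, not something you get for free.
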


The next result is from \cite{DKP01}.
\begin{thm}
[Structure Theorem for Free Semigroup Algebras]\label{thm:fsa-struct-thm}Let
$V=\left(V_{1},\ldots,V_{d}\right)$ be an isometric $d$-tuple, and
let $\mathcal{V}=\mathrm{W}(V_{1},\ldots,V_{d})$ denote the unital
weak operator closed algebra generated by $V_{1},\ldots,V_{d}$. Then
there is a maximal projection $P$ in $\mathcal{V}$ with the range
of $P$ coinvariant for $\mathcal{V}$ such that
\begin{enumerate}
\item \textup{$\mathcal{V}P=\cap_{k\geq1}(\mathcal{V}_{0})^{k}$, where
$(\mathcal{V}_{0})^{k}$ denotes the ideal $(\mathcal{V}_{0})^{k}=\sum_{|w|=k}V_{w}\mathcal{V},$}
\item if $P^{\perp}\ne0$, then the restriction of $\mathcal{V}$ to the
range of $P^{\perp}$ is an analytic free semigroup algebra,
\item the compression of $\mathcal{V}$ to the range of $P$ is a von Neumann
algebra, and
\item $\mathcal{V}=P^{\perp}\mathcal{V}P^{\perp}+\mathrm{W}^{*}(V)P.$
\end{enumerate}
\end{thm}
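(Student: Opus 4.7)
The plan is to identify $P$ as the maximal projection in $\mathcal{V}$ whose range is coinvariant and whose compression is a von Neumann algebra, verify existence and coinvariance by a Zorn argument, and then invest the main effort into establishing the analyticity of the complement; the remaining statements then follow from these.

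For existence, consider the family of projections $Q\in\mathcal{V}$ with $Q\mathcal{H}$ coinvariant for $\mathcal{V}$ and $Q\mathcal{V}Q$ a von Neumann algebra, ordered by range inclusion. The weak-operator supremum of an increasing chain in this family remains in $\mathcal{V}$, inherits coinvariance, and has von Neumann compression (as an inductive limit of von Neumann algebras), so Zorn's lemma yields a maximal element $P$. This gives~(3) by construction. Coinvariance of $P\mathcal{H}$ forces every $X\in\mathcal{V}$ to have upper-triangular block form $X=P^\perp XP^\perp+P^\perp XP+PXP$ with respect to $P^\perp\mathcal{H}\oplus P\mathcal{H}$, and $P^\perp\mathcal{H}$ is accordingly invariant for $\mathcal{V}$.

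The heart of the theorem is~(2), the assertion that the restriction $P^\perp\mathcal{V}P^\perp$ is an analytic free semigroup algebra whenever $P^\perp\neq 0$. By maximality of $P$, this restriction contains no further nontrivial projection with coinvariant range whose compression is a von Neumann algebra; the substantive claim is that this absence forces an isomorphism with the universal model $\Ld$. The approach in \cite{DKP01} is to manufacture a wandering vector $\xi\in P^\perp\mathcal{H}$ (one for which $\{V_w\xi\}_{w\in\mathbb{F}_d^*}$ is orthonormal), identify the cyclic subspace $\overline{\mathrm{span}}\{V_w\xi\}$ with $\Fd$ so that $\mathcal{V}$ acts as $\Ld$, and exhaust $P^\perp\mathcal{H}$ by orthogonal copies of such subspaces. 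Constructing the wandering vector under the sole hypothesis that no compression is von Neumann is the principal technical obstacle and requires a delicate interplay between Popescu's noncommutative Poisson transform, the structure of the commutant $\mathcal{V}'$, and a Wold-type approximation argument.

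Property~(4) then follows quickly: the $(2,2)$-corner $PXP$ lies in $P\mathcal{V}P=PW^*(V)P$ by~(3), and $XP=P^\perp XP+PXP$ lies in $W^*(V)P$ because $X$ and $P$ both belong to $W^*(V)$; combined with $P^\perp XP^\perp\in P^\perp\mathcal{V}P^\perp$, this yields $\mathcal{V}\subseteq P^\perp\mathcal{V}P^\perp+W^*(V)P$, while the reverse containment $W^*(V)P\subseteq\mathcal{V}$ uses the same wandering-vector / Poisson-transform analysis that underlies~(2) in \cite{DKP01}. For~(1), invoke~(4) to place $V_w^*P$ in $W^*(V)P\subseteq\mathcal{V}$. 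Together with the inclusion $P\mathcal{H}\subseteq Q_k\mathcal{H}$ for every $k$ (where $Q_k=\sum_{|w|=k}V_wV_w^*$), the identity $P=\sum_{|w|=k}V_w(V_w^*P)$ shows $P\in(\mathcal{V}_0)^k$, and the two-sided ideal property of $(\mathcal{V}_0)^k$ (closed under left multiplication via the regrouping $V_i\cdot V_wB=V_{iw_1\cdots w_{k-1}}\cdot V_{w_k}B$) then gives $\mathcal{V}P\subseteq\bigcap_k(\mathcal{V}_0)^k$. The reverse inclusion is direct: any $X\in\bigcap_k(\mathcal{V}_0)^k$ has range inside $\bigcap_k Q_k\mathcal{H}\subseteq P\mathcal{H}$, while invariance of $P^\perp\mathcal{H}$ under $\mathcal{V}$ forces $XP^\perp=0$, so $X=XP\in\mathcal{V}P$.
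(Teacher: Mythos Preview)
This theorem is not proved in the paper; it is quoted from \cite{DKP01}, so there is no in-paper proof to compare against. Your outline, however, diverges substantially from the actual argument in \cite{DKP01} and contains real gaps.

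Davidson--Katsoulis--Pitts do not use a Zorn argument. They begin with the ideal $\mathcal{J}=\bigcap_{k\geq1}(\mathcal{V}_0)^k$ and show, by a direct computation, that $\mathcal{J}$ is a weak-operator closed left ideal in the enveloping von Neumann algebra $\mathrm{W}^*(V)$; hence $\mathcal{J}=\mathrm{W}^*(V)P$ for a projection $P$, which one then checks lies in $\mathcal{V}$. In their approach property~(1) is the \emph{definition} of $P$, and (4) is immediate since $\mathrm{W}^*(V)P=\mathcal{J}\subseteq\mathcal{V}$ by construction. By contrast, your Zorn construction of $P$ leaves (1) to be proved, and your argument for the inclusion $\bigcap_k(\mathcal{V}_0)^k\subseteq\mathcal{V}P$ invokes $\bigcap_k Q_k\mathcal{H}\subseteq P\mathcal{H}$ without justification; nothing in your definition of $P$ as a Zorn-maximal projection with self-adjoint compression implies this. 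The Zorn step itself is also unsupported: that the supremum of a chain of projections with self-adjoint compressions again has self-adjoint compression is not automatic, and ``inductive limit of von Neumann algebras'' does not settle it.

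Your sketch of (2) via wandering vectors aims at a strictly stronger conclusion than the theorem asserts. Exhausting $P^\perp\mathcal{H}$ by orthogonal shift-cyclic subspaces would force $V|_{P^\perp\mathcal{H}}$ to be a unilateral shift, whereas ``analytic'' here means only that the canonical map from $\Ld$ to $\mathcal{V}|_{P^\perp\mathcal{H}}$ is injective (equivalently, Fourier coefficients determine elements). In \cite{DKP01} this follows immediately from $\bigcap_k(\mathcal{V}_0|_{P^\perp\mathcal{H}})^k=0$, which is built into the definition of $P$ via (1); no wandering vectors are needed. Indeed, the existence of wandering vectors for an arbitrary analytic free semigroup algebra was open at the time of \cite{DKP01} and was only settled in \cite{Ken11}, so the route you describe was not available and in any case overshoots the target (an absolutely continuous unitary tuple is analytic but is not a direct sum of shifts).
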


Let $V=V_{u}\oplus V_{a}\oplus V_{s}\oplus V_{d}$ be the Lebesgue-von
Neumann-Wold decomposition of an isometric tuple $V$, as in Theorem
\ref{thm:leb-vn-wold-decomp}, where $V_{u}$ is a unilateral $n$-shift,
$V_{a}$ is an absolutely continuous unitary $n$-tuple, $V_{s}$
is a singular unitary $n$-tuple and $V_{d}$ is a unitary $n$-tuple
of dilation type. Suppose that $V$ is defined on a Hilbert space
$H$, and let $H=H_{u}\oplus H_{a}\oplus H_{s}\oplus H_{d}$ denote
the corresponding decomposition of $H$. By Corollary 2.7 of \cite{DKP01},
there is a maximal invariant subspace $K$ for $V_{d}$ such that
the restriction of $V_{d}$ to $K$ is analytic. The projection $P$
in Theorem \ref{thm:fsa-struct-thm} is determined by $P^{\perp}=P_{H_{u}}\oplus P_{H_{a}}\oplus P_{K}$.

\begin{rem}
\label{rem:proj-reducing-dim-1}For $d=1$, an isometry is the direct
sum of a unilateral shift, an absolutely continuous unitary and a
singular unitary. Theorem \ref{thm:fsa-struct-thm} implies that,
in this case, the structure projection $P$ is the projection onto
the singular unitary part. In particular, this implies that $P$ is
reducing. For $d\geq2$, the proof of Theorem 3.3 shows that $P$
is reducing if and only if there is no summand of dilation type.
\end{rem}

\subsection{The universal representation\label{sub:universal-rep}}

We require the universal representation $\pi_{u}:\Ld\to\mathcal{B}(H_{u})$
of $\Ld$. This can be constructed as in 2.4.4 of \cite{BL05}, as
the restriction of the universal representation of $\mathrm{C}_{\mathrm{max}}^{*}(\Ld)$.
By 3.2.12 of \cite{BL05}, we can identify the double dual $(\Ld)^{**}$
of $\Ld$ with the algebra obtained as the weak-{*} closure of $\pi_{u}(\Ld)$.
We will require the operator algebra structure on $\Ldd$ provided
by this identification. By replacing $\pi_{u}$ by $\pi_{u}^{(\infty)}$
if necessary, we can suppose that $\pi_{u}$ has infinite multiplicity,
and hence that the weak operator topology coincides with the weak-{*}
topology on $\Ldd$.

Let $\phi$ be a bounded linear functional on $\Ld$. By the Hahn-Banach
Theorem, we can extend $\phi$ to a functional on $\mathrm{C}_{\mathrm{max}}^{*}(\Ld)$
with the same norm. Hence by the construction of the universal representation
of $\mathrm{C}_{\mathrm{max}}^{*}(\Ld)$, there are vectors $x$ and
$y$ in $H_{u}$ with $\|x\|\|y\|=\|\phi\|$ such that
\[
\phi(A)=\langle\pi_{u}(A)x,y\rangle,\quad\forall A\in\Ld.
\]
If we identify $\Ld$ with its image $\pi_{u}(\Ld)$ in $(\Ld)^{**}$,
then the functional $\phi$ has a unique weak-{*} continuous extension
to a functional on $(\Ld)^{**}$ with the same norm. We will use this
fact repeatedly.

Since $\pi_{u}$ is the restriction of a {*}-homomorphism of $\mathrm{C}_{\mathrm{max}}^{*}(\Ld)$,
and since the $d$-tuple $(L_{Z_{1}},\ldots,L_{Z_{d}})$ is isometric,
it follows that the $d$-tuple $(\pi_{u}(L_{Z_{1}}),\ldots,\pi_{u}(L_{Z_{d}}))$
is also isometric. Since $(\Ld)^{**}$ contains $\pi_{u}(A_{d})$,
it necessarily contains the weak operator closed algebra generated
by $(\pi_{u}(L_{Z_{1}}),\ldots,\pi_{u}(L_{Z_{d}}))$. Let $P_{u}$
denote the projection in $(\Ld)^{**}$ guaranteed by Theorem \ref{thm:fsa-struct-thm}.
We will refer to $P_{u}$ as the universal structure projection in
$\Ldd$.

\begin{rem}
Let $\mathcal{S}$ denote the unital weak operator closed algebra
generated by  $\pi_{u}(L_{Z_{1}}),\ldots,\pi_{u}(L_{Z_{d}})$. From
above we have $\mathcal{S}\subseteq(\Ld)^{**}$, and one might guess
that $\mathcal{S}=(\Ld)^{**}$. However, this is not the case. Indeed,
let $\phi$ be a bounded nonzero functional on $\Ld$ that is zero
on the noncommutative disk algebra $\Ad$. Then as above, there are
vectors $x$ and $y$ in $H_{u}$ such that
\[
\phi(A)=\langle\pi_{u}(A)x,y\rangle,\quad\forall A\in\Ld.
\]
Let $\psi$ denote the weak operator continuous functional on $\mathcal{S}$
defined by
\[
\psi(S)=\langle Sx,y\rangle,\quad\forall S\in\mathcal{S}.
\]
Since $\phi$ is zero on $A_{d}$, $\psi$ must be zero on $\pi_{u}(A_{d}).$
Then, since $\pi_{u}(A_{d})$ is weak operator dense in $\mathcal{S}$,
it follows that $\psi(S)=\langle Sx,y\rangle=0$ for all $S$ in $\mathcal{S}$.
But, by assumption, there is $A$ in $\Ld$ such that $\phi(A)=\langle\pi_{u}(A)x,y\rangle\ne0$.
So we see that $\pi_{u}(A)\notin\mathcal{S}$, and hence that the
inclusion $\mathcal{S}\subseteq(\Ld)^{**}$ is proper.
\end{rem}

\section{The Lebesgue decomposition}

In this section, we introduce the definitions of absolutely continuous
and singular linear functionals on the noncommutative analytic Toeplitz
algebra $\Ld$, and establish the first version of the Lebesgue decomposition.
In \cite{DLP05}, Davidson, Li and Pitts proved a Lebesgue-type decomposition
for functionals on the noncommutative disk algebra $\Ad$. Although
the algebra $\Ld$ is bigger than $\Ad$, the next definitions is
closely related to (and directly inspired by) the corresponding definition
for $\Ad$.
\begin{defn}
\label{def:abs-cont-and-sing}Let $\phi$ be a bounded linear functional
on $\Ld$. Then\end{defn}
\begin{enumerate}
\item $\phi$ is \emph{absolutely continuous} if it is weak-{*} continuous,
and
\item $\phi$ is \emph{singular} if $\|\phi\|=\|\phi^{k}\|$ for every $k\geq1$,
where $\phi^{k}$ denotes the restriction of $\phi$ to the ideal
of $\Ld$ generated by $\{L_{Z_{w}}\mid|w|=k\}.$
\end{enumerate}

Let $\phi$ be a bounded linear functional on $\Ld$. Then as in Section
\ref{sub:universal-rep}, there are vectors $x$ and $y$ in $H_{u}$
with $\|x\|\|y\|=\|\phi\|$ such that
\[
\phi(A)=\langle\pi_{u}(A)x,y\rangle,\quad\forall A\in\Ld.
\]
We will write $\phi P_{u}$ and $\phi P_{u}^{\perp}$ for the linear
functionals defined on $\Ld$ by
\begin{gather*}
(\phi P_{u})(A)=\langle\pi_{u}(A)P_{u}x,y\rangle,\quad\forall A\in\Ld,\\
(\phi P_{u}^{\perp})(A)=\langle\pi_{u}(A)P_{u}^{\perp}x,y\rangle,\quad\forall A\in\Ld,
\end{gather*}
where $P_{u}$ denotes the universal structure projection from Section
\ref{sub:universal-rep}. The purpose of the next result is to verify
that $\phi P_{u}$ and $\phi P_{u}^{\perp}$ are well defined.

\begin{lem}
Let $\phi$ be a bounded linear functional on $\Ld$. Then the functionals
$\phi P_{u}$ and $\phi P_{u}^{\perp}$, as defined above, do not
depend on the choice of vectors $x$ and $y$.\end{lem}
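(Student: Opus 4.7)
The plan is to identify both $\phi P_u$ and $\phi P_u^\perp$ as restrictions to $\Ld$ of canonical weak-{*} continuous functionals on $\Ldd$, and then invoke the uniqueness of the weak-{*} continuous extension of $\phi$ to conclude that the definition is independent of the representing vectors. No tricky calculation is needed; the point is to organize the facts already recorded in Section~\ref{sub:universal-rep}.

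First I would recall that, by the choice made in Section~\ref{sub:universal-rep}, the representation $\pi_u$ has infinite multiplicity, so the weak operator topology and the weak-{*} topology coincide on $\Ldd$. Consequently, for any vectors $x,y \in H_u$, the vector functional
\[
\Phi_{x,y}\colon T\longmapsto \langle Tx,y\rangle, \qquad T\in\Ldd,
\]
is weak-{*} continuous. Moreover, by 3.2.12 of \cite{BL05}, $\pi_u(\Ld)$ is weak-{*} dense in $\Ldd$, so a weak-{*} continuous functional on $\Ldd$ is determined by its restriction to $\pi_u(\Ld)$.

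Now suppose $(x_1,y_1)$ and $(x_2,y_2)$ are two pairs in $H_u$ with $\phi(A)=\langle \pi_u(A)x_i,y_i\rangle$ for all $A\in\Ld$ and $i=1,2$. Then $\Phi_{x_1,y_1}$ and $\Phi_{x_2,y_2}$ are weak-{*} continuous functionals on $\Ldd$ that agree on $\pi_u(\Ld)$; by weak-{*} density they agree on all of $\Ldd$, so both coincide with the unique weak-{*} continuous extension $\tilde\phi$ of $\phi$. Evaluating both at $\pi_u(A)P_u \in \Ldd$ (which is well defined since $\Ldd$ is an algebra containing $\pi_u(A)$ and $P_u$) gives
\[
\langle \pi_u(A)P_u x_1, y_1\rangle \;=\; \tilde\phi(\pi_u(A)P_u) \;=\; \langle \pi_u(A)P_u x_2, y_2\rangle
\]
for every $A\in\Ld$, so $\phi P_u$ depends only on $\phi$. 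Replacing $P_u$ by $P_u^\perp$ in the last line gives the corresponding statement for $\phi P_u^\perp$.

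The only delicate point is the coincidence of weak operator and weak-{*} topologies on $\Ldd$, but this is guaranteed by the infinite multiplicity assumption already built into the setup of Section~\ref{sub:universal-rep}; once that is available, the whole argument is essentially a uniqueness-of-weak-{*}-extension statement combined with the fact that $\pi_u(\Ld)$ is weak-{*} dense in its bidual.
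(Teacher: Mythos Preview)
Your proof is correct and follows essentially the same approach as the paper: both arguments use that the two vector functionals agree on $\pi_u(\Ld)$, are weak-{*} continuous on $\Ldd$ (via the infinite-multiplicity convention), and hence agree on all of $\Ldd$, in particular at $\pi_u(A)P_u$ and $\pi_u(A)P_u^\perp$. The only difference is that you spell out explicitly why vector functionals are weak-{*} continuous, whereas the paper leaves this implicit.
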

\begin{proof}
Let $x_{1},y_{1}$ and $x_{2},y_{2}$ be pairs of vectors in $H_{u}$
such that
\[
\langle\pi_{u}(A)x_{1},y_{1}\rangle=\langle\pi_{u}(A)x_{2},y_{2}\rangle,\quad\forall A\in\Ld.
\]
Since $\pi_{u}(\Ld)$ is weak-{*} dense in the algebra $(\Ld)^{**}$,
which contains $P_{u}$, it follows immediately that
\[
\langle\pi_{u}(A)P_{u}x_{1},y_{1}\rangle=\langle\pi_{u}(A)P_{u}x_{2},y_{2}\rangle,\quad\forall A\in\Ld,
\]
and similarly that
\[
\langle\pi_{u}(A)P_{u}^{\perp}x_{1},y_{1}\rangle=\langle\pi_{u}(A)P_{u}^{\perp}x_{2},y_{2}\rangle,\quad\forall A\in\Ld.
\]

\end{proof}

\begin{prop}
\label{prop:char-singular}A bounded functional $\phi$ on $\Ld$
is singular if and only if $\phi=\phi P_{u}$.\end{prop}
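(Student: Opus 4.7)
My plan is to prove the two implications separately, using the structure theorem (Theorem~\ref{thm:fsa-struct-thm}) and Goldstine's theorem as the main tools. Let $\tilde{\phi}$ denote the unique weak-$*$ continuous extension of $\phi$ to $\Ldd$, and let $J_{k}$ be the ideal of $\Ld$ generated by $\{L_{Z_{w}}\mid|w|=k\}$.

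For the direction $\phi=\phi P_{u}\Rightarrow\phi$ singular: one has $\phi(A)=\tilde{\phi}(\pi_{u}(A)P_{u})$ for every $A\in\Ld$. By the structure theorem, $\mathcal{S}_{u}P_{u}=\bigcap_{k}(\mathcal{S}_{u,0})^{k}$, so $\pi_{u}(A)P_{u}$ lies in the weak-$*$ closure $J_{k}^{**}$ of $\pi_{u}(J_{k})$ inside $\Ldd$ for every $k$, with norm at most $\|A\|$. Goldstine's theorem then produces a net $B_{\alpha}\in J_{k}$ with $\|B_{\alpha}\|\le\|A\|$ and $\pi_{u}(B_{\alpha})\to\pi_{u}(A)P_{u}$ weak-$*$, whence $\phi(B_{\alpha})\to\phi(A)$ and $\|\phi^{k}\|\ge|\phi(A)|/\|A\|$; taking the supremum over $A$ forces $\|\phi^{k}\|=\|\phi\|$.

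For the direction $\phi$ singular $\Rightarrow\phi=\phi P_{u}$: I fix norming vectors $x,y\in H_{u}$ with $\|x\|\|y\|=\|\phi\|$ and aim to show $P_{u}^{\perp}x=0$, which immediately yields $\phi P_{u}^{\perp}=0$. This will rest on two auxiliary claims. Claim (i): $\phi P_{u}^{\perp}$ is absolutely continuous. Since the range of $P_{u}$ is coinvariant for $\mathcal{S}_{u}$, we have $P_{u}\pi_{u}(A)P_{u}^{\perp}=0$, so $\phi P_{u}^{\perp}(A)=\langle P_{u}^{\perp}\pi_{u}(A)P_{u}^{\perp}x,P_{u}^{\perp}y\rangle$, and the structure theorem identifies the compression of $\pi_{u}(\Ld)$ to the range of $P_{u}^{\perp}$ with an analytic free semigroup algebra, weak-$*$ continuously isomorphic to $\Ld$, so $\phi P_{u}^{\perp}$ is weak-$*$ continuous on $\Ld$. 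Claim (ii): every absolutely continuous $\psi$ on $\Ld$ satisfies $\|\psi^{k}\|\to0$. Writing $\psi(A)=\sum_{i}\langle A\xi_{i},\eta_{i}\rangle$ with $\sum\|\xi_{i}\|^{2},\sum\|\eta_{i}\|^{2}<\infty$, and using that any $A\in J_{k}$, of the form $\sum_{|w|=k}L_{Z_{w}}B_{w}$, maps $\Fd$ into the range of the projection $\sum_{|w|=k}L_{Z_{w}}L_{Z_{w}}^{*}$ onto monomials of degree at least $k$, I get $|\psi(A)|\le\|A\|\sum_{i}\|\xi_{i}\|\|P_{\ge k}\eta_{i}\|$, and Cauchy--Schwarz together with dominated convergence yields the claim.

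Given (i) and (ii), the decomposition $\phi^{k}=(\phi P_{u})^{k}+(\phi P_{u}^{\perp})^{k}$ gives $\|\phi\|=\|\phi^{k}\|\le\|\phi P_{u}\|+\|(\phi P_{u}^{\perp})^{k}\|$, and letting $k\to\infty$ yields $\|\phi\|\le\|\phi P_{u}\|\le\|P_{u}x\|\|y\|$. Combined with $\|x\|\|y\|=\|\phi\|$ this forces $\|P_{u}x\|=\|x\|$, i.e.\ $P_{u}^{\perp}x=0$, so $\phi P_{u}^{\perp}=0$ and $\phi=\phi P_{u}$. I expect (i) to be the main obstacle: it depends on the non-trivial identification, built into the structure theorem, of $P_{u}^{\perp}\pi_{u}(\Ld)P_{u}^{\perp}$ as an analytic free semigroup algebra, together with the weak-$*$ continuity of the isomorphism with $\Ld$. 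Once (i) is in hand, (ii) is a routine Hilbert-space estimate and the remainder of the argument is a short norm calculation.
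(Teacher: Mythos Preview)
Your argument for the implication $\phi=\phi P_{u}\Rightarrow\phi$ singular is essentially the paper's, with Goldstine's theorem made explicit; this direction is fine.

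The gap is in Claim~(i) for the converse. You assert that the coinvariance of $\operatorname{ran}P_{u}$ for $\mathcal{S}_{u}$ yields $P_{u}\pi_{u}(A)P_{u}^{\perp}=0$ for every $A\in\Ld$. But the structure theorem (Theorem~\ref{thm:fsa-struct-thm}) only controls the free semigroup algebra $\mathcal{S}_{u}$ generated by $\pi_{u}(L_{Z_{1}}),\ldots,\pi_{u}(L_{Z_{d}})$, and as Remark~2.5 makes explicit, $\pi_{u}(\Ld)$ is \emph{not} contained in $\mathcal{S}_{u}$: there exist $A\in\Ld$ with $\pi_{u}(A)\notin\mathcal{S}_{u}$. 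So the coinvariance clause of the structure theorem does not give $P_{u}\pi_{u}(A)P_{u}^{\perp}=0$ for such $A$, and your subsequent identification of $P_{u}^{\perp}\pi_{u}(\Ld)P_{u}^{\perp}$ with an analytic free semigroup algebra likewise overreaches what the structure theorem provides. In the paper, the invariance of $\operatorname{ran}P_{u}^{\perp}$ under all of $\Ldd$ is Lemma~\ref{lem:range-proj}, and that lemma is proved \emph{using} Proposition~\ref{prop:char-singular}; the characterization of $\phi P_{u}^{\perp}$ as absolutely continuous is then Proposition~\ref{prop:char-abs-cont}, which in turn rests on Lemma~\ref{lem:range-proj}. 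Thus your route is circular as written.

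The paper sidesteps this by a direct norm-attaining argument: normalize so that $\|\phi\|=1$, choose $A_{k}$ in the unit ball of $(F_{d,0}^{\infty})^{k}$ with $\phi(A_{k})\to1$, take a weak-$*$ accumulation point $T$ of $(\pi_{u}(A_{k}))$ in $\Ldd$, and observe that $T\in\bigcap_{k}\Ldd\mathcal{S}_{0}^{k}=\Ldd P_{u}$, whence $T=TP_{u}$. Then $1=\langle Tx,y\rangle=\langle TP_{u}x,y\rangle\le\|P_{u}x\|\|y\|\le\|x\|\|y\|=1$ forces $P_{u}x=x$. This avoids any appeal to invariance of $\operatorname{ran}P_{u}^{\perp}$ under $\pi_{u}(\Ld)$, which is precisely what cannot be assumed at this stage.
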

\begin{proof}
Let $\phi$ be a singular functional on $\Ld$. We can assume that
$\|\phi\|=1$. As in Section \ref{sub:universal-rep}, there are vectors
$x$ and $y$ in $H_{u}$ such that $\|x\|\|y\|=1$ and 
\[
\phi(A)=\langle\pi_{u}(A)x,y\rangle,\quad\forall A\in\Ld.
\]
By the singularity of $\phi$, we can find a sequence $(A_{k})$ of
elements in $\Ld$ such that $\lim\phi(A_{k})\to1$, and such that
each $A_{k}$ belongs to the unit ball of $(F_{d,0}^{\infty})^{k}=\sum_{|w|=k}\Ld L_{Z_{w}}$.
Let $T$ be an accumulation point of the sequence $(\pi_{u}(A_{k}))$
in $\Ldd$, and let $\mathcal{S}$ denote the unital weak operator
closed  algebra generated by $(\pi_{u}(L_{Z_{1}}),\ldots,\pi_{u}(L_{Z_{d}}))$.
It is clear that the weak-{*} closure of the image $\pi_{u}((F_{d,0}^{\infty})^{k})$
of the ideal $(F_{d,0}^{\infty})^{k}$ can be written as $\Ldd\mathcal{S}_{0}^{k}$,
where $\mathcal{S}_{0}$ denotes the ideal in $\mathcal{S}$ generated
by $\pi_{u}(L_{Z_{1}}),\ldots,\pi_{u}(L_{Z_{d}})$. Thus $\pi_{u}(A_{k})$
belongs to $\Ldd\mathcal{S}_{0}^{k}$. By Theorem \ref{thm:fsa-struct-thm},
$\mathcal{S}P_{u}=\cap_{k\geq1}\mathcal{S}_{0}^{k}$. Hence $T$ belongs
to the unit ball of

\[
\bigcap_{k\ge1}\Ldd\mathcal{S}_{0}^{k}=\Ldd\bigcap_{k\ge1}\mathcal{S}_{0}^{k}=\Ldd P_{u}.
\]
In particular, this means that $T=TP_{u}$. Since $\phi(T)=1$, this
gives
\[
\|x\|\|y\|=1=\langle Tx,y\rangle=\langle TP_{u}x,y\rangle\leq\|P_{u}x\|\|y\|\leq\|x\|\|y\|.
\]
Hence $P_{u}x=x$, and it follows that $\phi=\phi P_{u}$.

Conversely, let $\phi$ be a functional on $\Ld$ such that $\phi=\phi P_{u}$.
As before, we can assume that $\|\phi\|=1$, and there are vectors
$x$ and $y$ in $H_{u}$ such that $\|x\|\|y\|=1$ and 
\[
\phi(A)=\langle\pi_{u}(A)x,y\rangle,\quad\forall A\in\Ld.
\]
The fact that $\phi P_{u}=\phi$ implies that we can choose $x$ satisfying
$x=P_{u}x$, and hence that
\[
\phi(A)=\langle\pi_{u}(A)P_{u}x,y\rangle,\quad\forall A\in\Ld.
\]
Let $\psi$ denote the functional on $\Ldd$ defined by
\[
\psi(T)=\langle TP_{u}x,y\rangle,\quad\forall T\in\Ldd,
\]
and for $k\geq1$, let $\psi^{k}$ denote the restriction of $\psi$
to $\Ldd\mathcal{S}_{0}^{k}$. Then as above, 
\[
\Ldd P_{u}=\bigcap_{k\ge1}\Ldd\mathcal{S}_{0}^{k}.
\]
Hence $\|\psi\|=\|\psi^{k}\|$ for every $k\geq1$. It follows that
$\|\phi\|=\|\phi^{k}\|$, where $\phi^{k}$ is defined as in Definition
\ref{def:abs-cont-and-sing}, and hence that $\phi$ is singular.
\end{proof}

\begin{lem}
\label{lem:range-proj}The range of the projection $P_{u}^{\perp}$
is invariant for $\Ldd$.\end{lem}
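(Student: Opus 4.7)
The statement amounts to showing $P_u T P_u^\perp = 0$ for every $T \in \Ldd$. Since $\pi_u(\Ld)$ is weak-{*} dense in $\Ldd$ and the map $T \mapsto P_u T P_u^\perp$ is weak-{*} continuous (two-sided multiplication by fixed operators), it suffices to prove $P_u \pi_u(L) P_u^\perp = 0$ for every $L \in \Ld$. For $L$ a polynomial, $\pi_u(L) = L(V) \in \mathcal{S}$, where $V = (\pi_u(L_{Z_1}), \ldots, \pi_u(L_{Z_d}))$ and $\mathcal{S}$ is the unital weak operator closed algebra it generates in $\mathcal{B}(H_u)$; Theorem \ref{thm:fsa-struct-thm} then immediately yields the desired vanishing, since the range of $P_u^\perp$ is invariant for $\mathcal{S}$.

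For general $L \in \Ld$, my plan is to compare $\pi_u(L)$ with the element $\hat L \in \mathcal{S}$ produced by the canonical weak-{*} continuous $\Ld$-functional calculus for the isometric tuple $V$. This calculus assigns each $L$ a specific $\hat L \in \mathcal{S}$ satisfying $\hat p = p(V)$ for polynomials $p$; both $L \mapsto \hat L$ and $L \mapsto \pi_u(L)$ are bounded algebra homomorphisms from $\Ld$ into $\mathcal{B}(H_u)$ agreeing on polynomials, but the former is weak-{*} continuous while the latter generally is not (Remark 3.4). I claim that despite this difference they agree on the ``absolutely continuous'' subspace $P_u^\perp H_u$:
\[
\pi_u(L)\xi \;=\; \hat L\,\xi \qquad \text{for every } \xi \in P_u^\perp H_u.
\]
Since $\hat L \in \mathcal{S}$ preserves $P_u^\perp H_u$, this forces $\pi_u(L)\xi \in P_u^\perp H_u$, proving the lemma. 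To establish the identity, I would apply Theorem \ref{thm:leb-vn-wold-decomp} to the restricted tuple $V|_{P_u^\perp H_u}$, which by the definition of $P_u$ carries no singular or non-analytic dilation-type summand; for such an absolutely continuous tuple the $\Ld$-functional calculus is rigid, in the sense that any completely contractive homomorphism agreeing with the polynomial calculus is automatically the canonical weak-{*} continuous extension. Applied to the compression $L \mapsto P_u^\perp \pi_u(L) P_u^\perp|_{P_u^\perp H_u}$, this should force the required equality.

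The main obstacle is a subtle circularity: the compression $L \mapsto P_u^\perp \pi_u(L) P_u^\perp|_{P_u^\perp H_u}$ is \emph{a priori} only a bounded linear map, not multiplicative, because its multiplicativity is equivalent to the very invariance of $P_u^\perp H_u$ we are trying to establish. A rigorous argument must sidestep this, for instance by approximating $L$ by the noncommutative Ces\`aro polynomials $\sigma_N(L)$ (which converge to $L$ strongly on $\Fd$ by the standard noncommutative Fej\'er construction) and leveraging the weak-{*} continuity of the functional calculus on the absolutely continuous summands of $V|_{P_u^\perp H_u}$ to pass to the limit, thereby identifying the compressed action of $\pi_u(L)$ with $\hat L$ on $P_u^\perp H_u$ without ever presupposing multiplicativity of the compression.
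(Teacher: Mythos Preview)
Your diagnosis of the circularity is correct, but the proposed Ces\`aro workaround does not close the gap. The problem is that the limit you need to take lives precisely in the direction you cannot yet control. You want to show $P_u\pi_u(L)P_u^\perp=0$, i.e.\ that $\langle\pi_u(L)\xi,\eta\rangle=0$ for $\xi\in P_u^\perp H_u$ and $\eta\in P_u H_u$. Ces\`aro gives $\sigma_N(L)\to L$ weak-{*} in $\Ld$, but the inclusion $\pi_u:\Ld\hookrightarrow\Ldd$ is \emph{not} weak-{*} to weak-{*} continuous (this is exactly the content of the remark that $\mathcal S\subsetneq\Ldd$), so $\pi_u(\sigma_N(L))$ need not converge to $\pi_u(L)$ in any topology that evaluates the vector functional $\langle\,\cdot\,\xi,\eta\rangle$. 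The ``weak-{*} continuity of the functional calculus on the absolutely continuous part'' only speaks to vector functionals with \emph{both} legs in $P_u^\perp H_u$; it says nothing when $\eta\in P_u H_u$, which is the case you must handle. Even worse, knowing that $L\mapsto\langle\pi_u(L)\xi,\eta\rangle$ is weak-{*} continuous for $\xi,\eta\in P_u^\perp H_u$ is itself Proposition~\ref{prop:char-abs-cont}, whose proof in the paper uses the present lemma. So your route is circular on both ends: the rigidity statement needs multiplicativity of the compression, and the approximation step needs weak-{*} continuity of functionals that is only available after the lemma is proved.

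The paper sidesteps all of this by reusing Proposition~\ref{prop:char-singular}, which is already in hand. Fix $x=P_u^\perp x$, $y=P_u y$, and set $\phi(A)=\langle\pi_u(A)x,y\rangle$. Since the range of $P_u^\perp$ is invariant for $\mathcal S$ (Theorem~\ref{thm:fsa-struct-thm}), $\phi$ vanishes on $A_d$. Writing any $A\in\Ld$ as a polynomial of degree $<k$ plus a remainder in $(F_{d,0}^\infty)^k$ shows $\|\phi\|=\|\phi^k\|$ for every $k$, so $\phi$ is singular. By Proposition~\ref{prop:char-singular}, $\phi=\phi P_u$; but $P_u x=0$, hence $\phi=0$. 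This gives $P_u\pi_u(L)P_u^\perp=0$ for all $L\in\Ld$, and weak-{*} density finishes the job. The point is that singularity, unlike absolute continuity, is detected by a purely norm-theoretic condition on restrictions to ideals, and that condition can be verified from vanishing on polynomials alone.
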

\begin{proof}
It suffices to show that whenever $x$ and $y$ are vectors in $\Fd$
such that $x=P_{u}^{\perp}x$ and $y=P_{u}y$, and the functional
$\phi$ on $\Ld$ is defined by
\[
\phi(A)=\langle\pi_{u}(A)x,y\rangle,\quad\forall A\in\Ld,
\]
then $\phi=0$. By Theorem \ref{thm:fsa-struct-thm}, the range of
$P_{u}^{\perp}$ is invariant for $\pi_{u}(A_{d})$. Hence $\phi$
is zero on $\Ad$. Let $A$ be an element of $\Ld$. By Corollary
2.6 of \cite{DP98a}, for $k\geq1$, we can write A uniquely as 
\[
A=\sum_{|w|<k}a_{w}L_{Z_{w}}+A',
\]
where $A'$ belongs to $(F_{d,0}^{\infty})^{k}$. The fact that $\phi$
is zero on $\Ad$ implies that $\phi(A)=\phi(A')$. It follows from
Definition \ref{def:abs-cont-and-sing} that $\phi$ is singular.
Hence by Proposition \ref{prop:char-singular}, $\phi=\phi P_{u}$,
i.e.
\[
\phi(A)=\langle\pi_{u}(A)P_{u}x,y\rangle,\quad\forall A\in\Ld.
\]
Since $x=P_{u}^{\perp}x$, it follows that $\phi=0$, as required.
\end{proof}

\begin{prop}
\label{prop:char-abs-cont}Let $\phi$ be a bounded linear functional
on $\Ld$. Then $\phi$ is absolutely continuous if and only if $\phi=\phi P_{u}^{\perp}$.\end{prop}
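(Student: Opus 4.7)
The plan is to prove the two implications separately, in the same spirit as Proposition 3.3, using Lemma 3.4 and the approximation argument from the proof of Proposition 3.3.

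For the forward direction, assume $\phi$ is weak-* continuous and show $\phi P_u = 0$. Since weak-* continuous functionals on $\Ld$ arise from vector functionals in the predual, after passing to an infinite ampliation I can write $\phi(A) = \langle (A \otimes I)\xi, \eta\rangle$ for some $\xi, \eta \in \Fd \otimes \ell^2$. The key estimate is that whenever $X$ lies in the unit ball of the ideal $(\Ldz)^k$, the vector $(X \otimes I)\xi$ is supported on monomials of degree at least $k$, so
\[
|\phi(X)| \leq \|\xi\|\,\|P_{\geq k}\eta\|,
\]
where $P_{\geq k}$ denotes the projection onto the closed span of such monomials (with multiplicity). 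From the proof of Proposition 3.3, $P_u \in \bigcap_{k\geq 1}\Ldd \mathcal{S}_0^k$, so by Banach-Alaoglu there are bounded nets $(A_\alpha^k)$ in the unit ball of $(\Ldz)^k$ with $\pi_u(A_\alpha^k) \to P_u$ weak-*. Writing $\phi(A) = \langle \pi_u(A)x, y\rangle$ via Section 2.3, I compute
\[
\phi P_u(A) = \lim_\alpha \langle \pi_u(A A_\alpha^k)x, y\rangle = \lim_\alpha \phi(AA_\alpha^k),
\]
and since $(\Ldz)^k$ is a two-sided ideal with $AA_\alpha^k$ in its unit ball scaled by $\|A\|$, the key estimate yields $|\phi P_u(A)| \leq \|A\|\,\|\xi\|\,\|P_{\geq k}\eta\|$. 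Letting $k \to \infty$, the right-hand side vanishes since $\eta$ has square-summable coefficients, so $\phi P_u = 0$.

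For the reverse direction, assume $\phi = \phi P_u^\perp$ and write $\phi(A) = \langle \pi_u(A) P_u^\perp x, y\rangle$. By Lemma 3.4, the range $N$ of $P_u^\perp$ is invariant for $\Ldd$, so the compression $\sigma : \Ld \to \mathcal{B}(N)$ given by $\sigma(A) = \pi_u(A)|_N$ is a well-defined unital homomorphism. By Theorem 2.3(2), the WOT-closed algebra $\mathcal{V}_N$ generated by $\sigma(L_{Z_1}), \ldots, \sigma(L_{Z_d})$ is an analytic free semigroup algebra, and hence by Definition 2.1(2) is algebraically isomorphic to $\Ld$. The crucial input, which I would cite from the Davidson-Pitts theory (\cite{DP98b,DP99}), is that this isomorphism is in fact a weak-* homeomorphism, and hence $\sigma$ is weak-* continuous. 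Using the invariance of $N$ to rewrite $\phi(A) = \langle \sigma(A) P_u^\perp x, P_u^\perp y\rangle$, the functional $\phi$ becomes the composition of $\sigma$ with the weak-* continuous vector functional $T \mapsto \langle T P_u^\perp x, P_u^\perp y\rangle$ on $\mathcal{B}(N)$, and so $\phi$ is weak-* continuous.

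The main obstacle is the reverse direction, specifically the justification that the algebraic isomorphism between an analytic free semigroup algebra and $\Ld$ is in fact a weak-* homeomorphism. Although algebraic isomorphism is built into Definition 2.1(2), the promotion to a weak-* homeomorphism is the deepest input, relying on the structural analysis of free semigroup algebras in \cite{DP98b,DP99}; once granted, the proof is routine. By contrast, the forward direction is a straightforward estimate exploiting that $(\Ldz)^k$ is a two-sided ideal annihilating low-degree Fourier coefficients, together with the decay of the tails of $\eta \in \Fd \otimes \ell^2$.
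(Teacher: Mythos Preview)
Your reverse direction follows the paper's argument closely: both use Lemma~\ref{lem:range-proj} to pass to the range of $P_u^\perp$, rewrite $\phi(A)=\langle\pi_u(A)P_u^\perp x,P_u^\perp y\rangle$, and then invoke the deep structural fact that an analytic free semigroup algebra is canonically weak-{*} homeomorphic to $\Ld$. The one correction concerns the citation: the promotion from algebraic isomorphism to a weak-{*} homeomorphism is not available in \cite{DP98b,DP99} in the generality required here. That result is the main theorem of \cite{Ken12} (building on the wandering-vector results of \cite{Ken11} and the absolute-continuity framework of \cite{DLP05}), and the paper cites it accordingly. The paper also restricts to the cyclic subspace generated by $P_u^\perp x$ and $P_u^\perp y$ rather than to all of $\operatorname{ran}P_u^\perp$, but this difference is cosmetic.

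Your forward direction is correct but genuinely different from the paper's. The paper argues in one line: a weak-{*} continuous $\phi$ is implemented by vectors in $\Fd$ (or $\Fd\otimes\ell^2$ after ampliation), and since the identity representation of $\Ld$ on $\Fd$ sits inside the unilateral-shift summand of $\pi_u$, those vectors already lie in $\operatorname{ran}P_u^\perp$, whence $\phi=\phi P_u^\perp$. You instead approximate $P_u$ weak-{*} by elements of the unit ball of $(\Ldz)^k$ (via $P_u\in\bigcap_k\Ldd\mathcal{S}_0^k$ and Goldstine), and then use the Fourier-tail estimate $|\phi(X)|\le\|\xi\|\,\|P_{\ge k}\eta\|$ for $X$ in that ideal to force $\phi P_u=0$. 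This is a valid and self-contained argument that makes the role of the ideal filtration explicit; the paper's embedding observation is simply shorter and avoids the approximation machinery altogether.
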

\begin{proof}
Suppose first that $\phi$ is absolutely continuous. Then it is weak-{*}
continuous, so there are sequences of vectors $(x_{k})$ and $(y_{k})$
in $\Fd$ such that
\[
\phi(A)=\sum\langle Ax_{k},y_{k}\rangle,\quad\forall A\in\Ld.
\]
Since the $d$-tuple $(L_{Z_{1}},\ldots,L_{Z_{d}})$ is equivalent
to a restriction of the unilateral shift part of the $d$-tuple $(\pi_{u}(L_{Z_{1}}),\ldots,\pi_{u}(L_{Z_{d}}))$,
$F_{d}^{2}$ can be identified with a subspace of $H_{u}$, and it
follows that $\phi=\phi P_{u}^{\perp}$.

Conversely, suppose that $\phi=\phi P_{u}^{\perp}$. As in Section
\ref{sub:universal-rep}, there are vectors $x$ and $y$ in $H_{u}$
with $\|x\|\|y\|=\|\phi\|$ such that
\[
\phi(A)=\langle\pi_{u}(A)x,y\rangle,\quad\forall A\in\Ld.
\]
The fact that $\phi=\phi P_{u}^{\perp}$ implies that we can choose
$x$ satisfying $P_{u}^{\perp}x=x$. Since, by Lemma \ref{lem:range-proj},
the range of $P_{u}^{\perp}$ is invariant for $\pi_{u}(\Ld)$, it
follows that for every $A$ in $\Ld$, we have
\[
\phi(A)=\langle\pi_{u}(A)x,y\rangle=\langle P_{u}^{\perp}\pi_{u}(A)P_{u}^{\perp}x,y\rangle=\langle\pi_{u}(A)P_{u}^{\perp}x,P_{u}^{\perp}y\rangle.
\]
Hence we can also choose $y$ satisfying $P_{u}^{\perp}y=y$.

By the construction of $P_{u}$, the restriction of the operators
$\pi_{u}(L_{Z_{1}}),\ldots,\pi_{u}(L_{Z_{d}})$ to the cyclic subspace
generated by $x$ and $y$ is analytic. Thus, by the main result of
 \cite{Ken12}, the weak-{*} closed algebra generated by this restriction
is completely isometrically isomorphic and weak-{*} to weak-{*} homeomorphic
to $\Ld$. It follows that $\phi$ is weak-{*} continuous on $\Ld$.
\end{proof}

\begin{thm}
[Lebesgue Decomposition for $F_d^\infty$]\label{thm:lebesgue-decomp}Let
$\phi$ be a bounded linear functional on $\Ld$. Then there are unique
linear functionals $\phi_{a}$ and $\phi_{s}$ on $\Ld$ such that
$\phi=\phi_{a}+\phi_{s}$, where $\phi_{a}$ is absolutely continuous
and $\phi_{s}$ is singular, and such that 
\[
\|\phi\|\leq\|\phi_{a}\|+\|\phi_{s}\|\leq\sqrt{2}\|\phi\|.
\]
If $d=1$, then the constant $\sqrt{2}$ can be replaced with the
constant $1$.\end{thm}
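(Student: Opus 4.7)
The plan is to use the universal structure projection $P_u \in \Ldd$ to define the decomposition directly, setting $\phi_{a} = \phi P_u^\perp$ and $\phi_{s} = \phi P_u$ in the sense of Lemma 3.2. Since $P_u^\perp + P_u = I$, the identity $\phi = \phi_a + \phi_s$ is immediate from evaluating at any representing pair of vectors. That $\phi_a$ is absolutely continuous follows from Proposition 3.5, because $(\phi P_u^\perp)P_u^\perp = \phi P_u^\perp$ since $P_u^\perp$ is a projection; that $\phi_s$ is singular follows from Proposition 3.3 in the same way.

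For uniqueness, it suffices to show that the only functional $\psi$ that is simultaneously absolutely continuous and singular is the zero functional. By Propositions 3.3 and 3.5, such a $\psi$ satisfies both $\psi = \psi P_u^\perp$ and $\psi = \psi P_u$. Writing $\psi(A) = \langle \pi_u(A)x, y\rangle$, the first identity lets us replace $x$ by $P_u^\perp x$ without changing $\psi$, and the second identity then gives $\psi(A) = \langle \pi_u(A) P_u P_u^\perp x, y\rangle = 0$ for every $A \in \Ld$.

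For the norm inequalities, the lower bound is the triangle inequality. For the upper bound, choose $x, y \in H_u$ with $\|x\|\|y\| = \|\phi\|$ and $\phi(A) = \langle \pi_u(A)x, y\rangle$. Since the range of $P_u^\perp$ is invariant for $\pi_u(\Ld)$ by Lemma 3.4, one has $\phi_a(A) = \langle \pi_u(A) P_u^\perp x, P_u^\perp y\rangle$, hence $\|\phi_a\| \leq \|P_u^\perp x\|\|y\|$. For $\phi_s$, only the coinvariance of the range of $P_u$ is available, so one has merely the one-sided bound $\|\phi_s\| \leq \|P_u x\|\|y\|$. Adding and applying the Cauchy-Schwarz inequality in $\mathbb{R}^2$ to the pair $(\|P_u^\perp x\|, \|P_u x\|)$ gives
\[
\|\phi_a\| + \|\phi_s\| \leq (\|P_u^\perp x\| + \|P_u x\|)\|y\| \leq \sqrt{2}\,\|x\|\|y\| = \sqrt{2}\|\phi\|.
\]

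When $d = 1$, Remark 2.6 tells us that $P_u$ is reducing, so the same argument applies symmetrically to $\phi_s$, now yielding $\|\phi_s\| \leq \|P_u x\|\|P_u y\|$ as well. A two-dimensional Cauchy-Schwarz then produces the sharper estimate
\[
\|\phi_a\| + \|\phi_s\| \leq \|P_u^\perp x\|\|P_u^\perp y\| + \|P_u x\|\|P_u y\| \leq \|x\|\|y\| = \|\phi\|.
\]
The main obstacle is precisely this asymmetry for $d \geq 2$: since $P_u$ is then only coinvariant and not reducing, we cannot refine the bound on $\|\phi_s\|$ to involve $\|P_u y\|$, and no Cauchy-Schwarz manipulation in the $y$-slot is available. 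This is exactly what forces the appearance of the constant $\sqrt{2}$, and (together with a concrete example, to be given separately) is what makes it optimal.
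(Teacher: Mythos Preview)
Your proof is correct and follows essentially the same approach as the paper: define $\phi_a=\phi P_u^\perp$ and $\phi_s=\phi P_u$, invoke Propositions~\ref{prop:char-singular} and~\ref{prop:char-abs-cont} for the required properties, and obtain the $\sqrt{2}$ bound from $\|P_u^\perp x\|+\|P_u x\|\le\sqrt{2}\,\|x\|$. The only noteworthy difference is in the $d=1$ case: the paper argues that $P_u$ reducing makes $\Ldd$ split as a direct sum, so the extended functional has norm $\|\phi_a\|+\|\phi_s\|$ exactly; your two-variable Cauchy--Schwarz estimate $\|P_u^\perp x\|\|P_u^\perp y\|+\|P_u x\|\|P_u y\|\le\|x\|\|y\|$ reaches the same conclusion more directly and is equally valid. (One minor correction: the remark you cite is Remark~\ref{rem:proj-reducing-dim-1}, not 2.6.)
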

\begin{proof}
As in Section \ref{sub:universal-rep}, there are vectors $x$ and
$y$ in $H_{u}$ such that $\|x\|\|y\|=\|\phi\|$ and 
\[
\phi(A)=\langle\pi_{u}(A)x,y\rangle,\quad\forall A\in\Ld.
\]
Define $\phi_{a}$ and $\phi_{s}$ by $\phi_{a}=\phi P_{u}^{\perp}$
and $\phi_{s}=\phi P_{u}$ respectively. Then $\phi_{a}$ is absolutely
continuous by Proposition \ref{prop:char-abs-cont}, and $\phi_{s}$
is singular by Proposition \ref{prop:char-singular}. We clearly have
$\phi=\phi_{a}+\phi_{s}$. To see that $\phi_{a}$ and $\phi_{s}$
are unique, suppose that
\[
\phi_{a}+\phi_{s}=\psi_{a}+\psi_{s},
\]
where $\psi_{a}$ is absolutely continuous and $\psi_{s}$ is absolutely
continuous. Then
\[
\phi_{a}-\psi_{a}=\psi_{s}-\phi_{s}.
\]
It is clear that the functional $\phi_{a}-\psi_{a}$ is absolutely
continuous, and Proposition \ref{prop:char-singular} implies that
the functional $\psi_{s}-\phi_{s}$ is singular. Applying Proposition
\ref{prop:char-abs-cont} and Proposition \ref{prop:char-singular}
again, we can therefore write
\[
\phi_{a}-\psi_{a}=(\phi_{a}-\psi_{a})P_{u}^{\perp}=(\psi_{s}-\phi_{s})P_{u}^{\perp}=(\psi_{s}-\phi_{s})P_{u}P_{u}^{\perp}=0.
\]
Hence $\phi_{a}=\psi_{a}$, and it follows similarly that $\phi_{s}=\psi_{s}$.
Finally, we compute
\[
\|\phi\|\leq\|\phi_{a}\|+\|\phi_{s}\|\leq\|Px\|\|y\|+\|P^{\perp}x\|\|y\|\leq\sqrt{2}\|x\|\|y\|=\sqrt{2}\|\phi\|.
\]
If $d=1$, then Remark \ref{rem:proj-reducing-dim-1} implies that
$\Ldd$ is the direct sum of two algebras reduced by $P_{u}$. If
we identify $\Ld$ with its image in $\Ldd$, then the functionals
$\phi$, $\phi_{a}$ and $\phi_{s}$ extend uniquely to weak-{*} continuous
functionals on $\Ldd$ with the same norm. Since $\phi_{a}=\phi_{a}P_{u}^{\perp}$
and $\phi_{s}=\phi_{s}P_{u}$, it follows that in this case, $\|\phi\|=\|\phi_{a}\|+\|\phi_{s}\|$.
\end{proof}

The next example is based on Example 5.10 from \cite{DLP05}. It establishes
that for $d\geq2$, the constant $\sqrt{2}$ in the statement of Theorem
\ref{thm:lebesgue-decomp} is the best possible.
\begin{example}
\label{ex:constant}Define $\phi$ on $\mathbb{C}\langle Z\rangle$
by setting
\[
\phi(L_{Z_{w}})=\begin{cases}
1/\sqrt{2} & \mbox{if}\ w=\varnothing\ \mbox{or}\ w=21^{n}\ \mbox{for}\ n\geq0,\\
0 & \mbox{otherwise},
\end{cases}
\]
and extending by linearity. We will first show that $\phi$ extends
to a bounded linear functional on the noncommutative disk algebra
$A_{2}$. Let $\mathcal{H}_{\phi}$ denote the Hilbert space $\mathbb{C}e\oplus F_{2}^{2}$,
and define a $2$-tuple $S=(S_{1},S_{2})$ on $\mathcal{H}_{\phi}$
by setting
\[
S_{1}=\left(\begin{array}{cc}
I & 0\\
0 & L_{1}
\end{array}\right),\quad S_{2}=\left(\begin{array}{cc}
0 & 0\\
\xi_{\varnothing}e^{*} & L_{2}
\end{array}\right).
\]
It is easy to check that $S$ is isometric. By the universal property
of the noncommutative disk algebra, we obtain a completely isometric
representation $\pi_{\phi}$ of $A_{2}$ satisfying 

\[
\pi_{\phi}(L_{Z_{w}})=S_{w_{1}}\cdots S_{w_{n}},\quad w=w_{1}\cdots w_{n}\in\mathbb{F}_{d}^{*},
\]
and we can extend $\phi$ to $A_{2}$ by
\[
\phi(A)=\langle\pi_{\phi}(A)(e+\xi_{\varnothing})/\sqrt{2},\xi_{\varnothing}\rangle,\quad A\in A_{2}.
\]
From this, it is easy to check that $\|\phi\|\leq1$. 

Let $\mathcal{S}$ denote the unital weakly closed algebra generated
by $S_{1}$ and $S_{2}$. The structure projection from Theorem \ref{thm:fsa-struct-thm}
is the projection $P$ onto $\mathbb{C}e$, which is contained in
$\mathcal{S}$. Hence $\mathcal{S}$ contains the element $B=(S_{2}P+P^{\perp})/\sqrt{2}$
. The results of \cite{Ken11} imply that Theorem 5.4 of \cite{DLP05}
apply to the unital weak operator closed algebra generated by any
isometric tuple. Thus there is a net $(B_{\lambda})$ of elements
in the unit ball of $A_{d}$ such that $\operatorname{w^{*}-lim}\pi_{\phi}(B_{\lambda})=B$
in $\mathcal{S}$. It is easy to check that $\|B\|=1$ and $\langle B(e+\xi_{\varnothing})/\sqrt{2},\xi_{\varnothing}\rangle=1$,
so it follows that $\|\phi\|=1$.

By the Hahn-Banach theorem, we can extend $\phi$ to a functional
on $\Ld$ with the same norm, which we continue to denote by $\phi$.
Let $\phi=\phi_{a}+\phi_{s}$ be the Lebesgue decomposition of $\phi$
into absolutely continuous and singular parts as in Theorem \ref{thm:lebesgue-decomp}.
Then restricted to $A_{d}$, we can write
\begin{gather*}
\phi_{a}(A)=(\phi P^{\perp})(A)=\langle\pi(A)\xi_{\varnothing}/\sqrt{2},\xi_{\varnothing}\rangle,\quad A\in A_{2},\\
\phi_{s}(A)=(\phi P)(A)=\langle\pi(A)e/\sqrt{2},\xi_{\varnothing}\rangle,\quad A\in A_{2}.
\end{gather*}
Letting $B$ be as above, an easy computation gives 
\[
\langle B\xi_{\varnothing}/\sqrt{2},\xi_{\varnothing}\rangle=\langle Be/\sqrt{2},\xi_{\varnothing}\rangle=1/\sqrt{2}.
\]
Arguing as before, this implies $\|\phi_{a}\|\geq1/\sqrt{2}$ and
$\|\phi_{s}\|\geq1/\sqrt{2}$. By Theorem \ref{thm:lebesgue-decomp},
it follows that $\|\phi_{a}\|+\|\phi_{s}\|=\sqrt{2}\|\phi\|.$
\end{example}

\section{The extended F. \& M. Riesz theorem}

The results in this section can be viewed as noncommutative generalizations
of the classical results referred to as the F. \& M. Riesz theorem.
As mentioned in the introduction, results of this kind have been established
in different settings by Exel in \cite{Exe90}, and by Blecher and
Labuschagne in \cite{BL07}. In fact, Blecher and Labuschagne seem
to have anticipated that an F. \& M. Riesz-type theorem should hold
for $\Ld$ (see the introduction of \cite{BL07}).
\begin{thm}
[Extended F. \& M. Riesz Theorem]\label{thm:ann-ideals}Let $\phi$
be a bounded linear functional on $\Ld$, and let $\phi=\phi_{a}+\phi_{s}$
be the Lebesgue decomposition of $\phi$ into absolutely continuous
and singular parts as in Theorem \ref{thm:lebesgue-decomp}. Let $\mathcal{I}$
be a two-sided ideal of $\Ld$. If $\phi$ is zero on $\mathcal{I}$,
then $\phi_{a}$ and $\phi_{s}$ are both zero on $\mathcal{I}$.\end{thm}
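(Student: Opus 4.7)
The plan is to extend $\phi$ to a weak-{*} continuous functional on $\Ldd$, where the universal structure projection $P_{u}$ is available, and then to observe that the weak-{*} closure of $\pi_{u}(\mathcal{I})$ is itself a two-sided ideal of $\Ldd$. Once both of these are in place, the desired vanishing of $\phi_{a}$ and $\phi_{s}$ on $\mathcal{I}$ becomes essentially tautological.

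Following Section 2.3, I would write $\phi(A)=\langle\pi_{u}(A)x,y\rangle$ for vectors $x,y\in H_{u}$ with $\|x\|\|y\|=\|\phi\|$, and let $\tilde\phi$ denote the weak-{*} continuous extension of $\phi$ to $\Ldd$ given by $\tilde\phi(T)=\langle Tx,y\rangle$. Combining the definitions of $\phi P_{u}$ and $\phi P_{u}^{\perp}$ with the identifications $\phi_{a}=\phi P_{u}^{\perp}$ and $\phi_{s}=\phi P_{u}$ from the proof of Theorem \ref{thm:lebesgue-decomp}, one obtains
\[
\phi_{a}(A)=\tilde\phi(\pi_{u}(A)P_{u}^{\perp}),\qquad \phi_{s}(A)=\tilde\phi(\pi_{u}(A)P_{u}), \qquad A\in\Ld.
\]
Now let $\tilde{\mathcal{I}}$ denote the weak-{*} closure of $\pi_{u}(\mathcal{I})$ in $\Ldd$. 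Using the concrete identification of $\Ldd$ with the weak-{*} closure of $\pi_{u}(\Ld)$ inside $\mathcal{B}(H_{u})$, a routine approximation argument (approximate left and right multipliers in $\Ldd$ by nets from $\pi_{u}(\Ld)$ and use separate weak-{*} continuity of multiplication in $\mathcal{B}(H_{u})$) shows that $\tilde{\mathcal{I}}$ is a weak-{*} closed two-sided ideal of $\Ldd$. Since $\phi$ vanishes on $\mathcal{I}$, the weak-{*} continuous functional $\tilde\phi$ vanishes on $\pi_{u}(\mathcal{I})$ and hence on all of $\tilde{\mathcal{I}}$.

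To finish, fix $A\in\mathcal{I}$. Then $\pi_{u}(A)\in\tilde{\mathcal{I}}$, and because $\tilde{\mathcal{I}}$ is a right ideal of $\Ldd$ while $P_{u},P_{u}^{\perp}\in\Ldd$, both $\pi_{u}(A)P_{u}$ and $\pi_{u}(A)P_{u}^{\perp}$ lie in $\tilde{\mathcal{I}}$. Applying $\tilde\phi$ then gives $\phi_{a}(A)=\phi_{s}(A)=0$, as required. The only point requiring care is the verification that the weak-{*} closure of a two-sided ideal of $\pi_{u}(\Ld)$ remains two-sided in $\Ldd$; had multiplication in $\Ldd$ only been given abstractly via an Arens product this might have required some bookkeeping about which of the two Arens products is separately weak-{*} continuous, but the concrete realization in Section 2.3 as a weak-{*} closed subalgebra of $\mathcal{B}(H_{u})$ makes this step routine.
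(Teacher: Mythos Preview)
Your argument is correct and is essentially the paper's own proof: both pass to the weak-{*} closure $\mathcal{J}$ of $\pi_{u}(\mathcal{I})$ in $\Ldd$, use that the weak-{*} continuous extension of $\phi$ vanishes on $\mathcal{J}$, and then observe that $\pi_{u}(A)P_{u}^{\perp}\in\mathcal{J}$ for $A\in\mathcal{I}$ (the paper then gets $\phi_{s}=\phi-\phi_{a}$ vanishing for free, whereas you treat $\pi_{u}(A)P_{u}$ symmetrically). Your added remarks on why $\mathcal{J}$ is a two-sided ideal and on the Arens-product subtlety are accurate and welcome, but do not change the underlying strategy.
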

\begin{proof}
As in Section \ref{sub:universal-rep}, there are vectors $x$ and
$y$ in $H_{u}$ such that
\[
\phi(A)=\langle\pi_{u}(A)x,y\rangle,\quad\forall A\in\Ld.
\]
By Proposition \ref{prop:char-abs-cont} we can write $\phi_{a}=\phi P_{u}^{\perp}$,
and by Proposition \ref{prop:char-singular} we can write $\phi_{s}=\phi P_{u}$.
If we identify $\Ld$ with its image $\pi_{u}(\Ld)$ in $\Ldd$, then
the functionals $\phi$, $\phi_{a}$ and $\phi_{s}$ each have unique
weak-{*} continuous extensions to functionals on $\Ldd$ with the
same norm.

Let $\mathcal{J}$ denote the ideal in $\Ldd$ obtained by taking
the weak-{*} closure of $\pi_{u}(\mathcal{I})$. Since $\phi$ is
zero on $\mathcal{I}$, it is zero on $\mathcal{J}$. For $A$ in
$\mathcal{I}$, $\pi_{u}(A)P_{u}^{\perp}$ belongs to $\mathcal{J}$,
which implies
\[
0=(\phi P_{u}^{\perp})(A)=\phi_{a}(A).
\]
Hence $\phi_{a}$ is zero on $\mathcal{I}$, and it follows immediately
that $\phi_{s}$ is also zero on $\mathcal{I}$.
\end{proof}

\begin{cor}
[F. \& M. Riesz Theorem]\label{cor:fmriesz}Let $\phi$ be a bounded
linear functional on $\Ld$. If $\phi$ is zero on $F_{d,0}^{\infty}$,
where $F_{d,0}^{\infty}$ denotes the ideal of $\Ld$ generated by
$L_{Z_{1}},\ldots,L_{Z_{d}}$, then $\phi$ is absolutely continuous.\end{cor}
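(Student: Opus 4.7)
The plan is to deduce this corollary directly from Theorem \ref{thm:ann-ideals} together with the defining property of singularity from Definition \ref{def:abs-cont-and-sing}. The observation is that $F_{d,0}^{\infty}$ is precisely the ideal appearing at level $k=1$ in that definition, so vanishing on $F_{d,0}^{\infty}$ kills the singular part outright.

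In more detail, I would first note that $F_{d,0}^{\infty}$ is a two-sided ideal of $\Ld$, so Theorem \ref{thm:ann-ideals} applies with $\mathcal{I}=F_{d,0}^{\infty}$. Let $\phi = \phi_a + \phi_s$ be the Lebesgue decomposition provided by Theorem \ref{thm:lebesgue-decomp}. Since by hypothesis $\phi$ is zero on $F_{d,0}^{\infty}$, Theorem \ref{thm:ann-ideals} yields that both $\phi_a$ and $\phi_s$ are zero on $F_{d,0}^{\infty}$.

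Next, I would invoke the definition of singularity: $\phi_s$ being singular means $\|\phi_s\| = \|\phi_s^k\|$ for every $k\geq 1$, where $\phi_s^k$ denotes the restriction of $\phi_s$ to the ideal of $\Ld$ generated by $\{L_{Z_w} : |w|=k\}$. Taking $k=1$, this ideal is precisely $F_{d,0}^{\infty}$, on which $\phi_s$ vanishes. Hence $\|\phi_s\| = \|\phi_s^1\| = 0$, so $\phi_s = 0$. Consequently $\phi = \phi_a$, which is absolutely continuous by Proposition \ref{prop:char-abs-cont}.

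There is no real obstacle here once Theorem \ref{thm:ann-ideals} is available; the entire content of the corollary is packaged inside that theorem, and the singularity definition is essentially set up to make the $k=1$ case force $\phi_s$ to be zero whenever it vanishes on $F_{d,0}^{\infty}$. The only thing worth double-checking is the identification of the $k=1$ ideal in Definition \ref{def:abs-cont-and-sing} with $F_{d,0}^{\infty}$ itself, which is immediate from the notational convention $F_{d,0}^{\infty}$ for the ideal generated by $L_{Z_1},\ldots,L_{Z_d}$.
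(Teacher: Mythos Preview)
Your proof is correct and follows essentially the same route as the paper: apply Theorem~\ref{thm:ann-ideals} with $\mathcal{I}=F_{d,0}^{\infty}$, then use the $k=1$ case of Definition~\ref{def:abs-cont-and-sing} to conclude $\phi_s=0$. The only superfluous step is the appeal to Proposition~\ref{prop:char-abs-cont} at the end, since $\phi_a$ is already absolutely continuous by the statement of Theorem~\ref{thm:lebesgue-decomp}.
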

\begin{proof}
Let $\phi=\phi_{a}+\phi_{s}$ be the Lebesgue decomposition of $\phi$
into absolutely continuous and singular parts as in Theorem \ref{thm:lebesgue-decomp}.
By Theorem \ref{thm:ann-ideals}, $\phi_{a}$ and $\phi_{s}$ are
both zero on $F_{d,0}^{\infty}$. By Definition \ref{def:abs-cont-and-sing},
if $\phi_{s}$ is zero on $F_{d,0}^{\infty}$, it is necessarily zero
on all of $\Ld$. Hence $\phi=\phi_{a}$ and $\phi$ is absolutely
continuous.
\end{proof}

\section{Quotient algebras}

For a weak-{*} closed two-sided ideal $\mathcal{I}$ of $\Ld$, let
$\mathcal{A}_{I}$ denote the quotient algebra $\Ld/\mathcal{I}$.
\begin{defn}
Let $\mathcal{I}$ be a weak-{*} closed two-sided ideal of $\Ld$,
and let $\phi$ be a bounded functional on $\mathcal{A}_{\mathcal{I}}$.
Then\end{defn}
\begin{enumerate}
\item $\phi$ is \emph{absolutely continuous} if it is weak-{*} continuous,
and
\item $\phi$ is \emph{singular} if $\|\phi\|=\|\phi^{k}\|$ for every $k\geq1$,
where $\phi^{k}$ denotes the restriction of $\phi$ to the ideal
of $\AI$ generated by $\{\overline{L_{Z_{w}}}\mid|w|=k\}$, where
for a word $w$ in $\mathbb{F}_{d}^{*}$, $\overline{L_{Z_{w}}}$
denotes the image in $\mathcal{A}_{\mathcal{I}}$ of $L_{Z_{w}}$.\end{enumerate}
\begin{thm}
[Lebesgue decomposition for quotients of $F_d^\infty$]\label{thm:lebesgue-decomp-quotients}Let
$\mathcal{I}$ be a weak-{*} closed two-sided ideal of $\Ld$, and
let $\phi$ be a bounded linear functional on $\mathcal{A}_{\mathcal{I}}$.
Then there are unique linear functionals $\phi_{a}$ and $\phi_{s}$
on $\mathcal{A}_{\mathcal{I}}$ such that $\phi=\phi_{a}+\phi_{s}$,
where $\phi_{a}$ is absolutely continuous and $\phi_{s}$ is singular,
and such that 
\[
\|\phi\|\leq\|\phi_{a}\|+\|\phi_{s}\|\leq\sqrt{2}\|\phi\|.
\]
If $d=1$, then the constant $\sqrt{2}$ can be replaced with the
constant $1$.\end{thm}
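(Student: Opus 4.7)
The plan is to reduce everything to the Lebesgue decomposition on $\Ld$ (Theorem 3.6) by lifting functionals through the quotient map $q\colon\Ld\to\AI$, and using the Extended F.~\&~M.~Riesz Theorem (Theorem 4.1) to ensure the decomposition on $\Ld$ respects the ideal $\mathcal{I}$ and therefore descends back to $\AI$. Concretely, given a bounded $\phi$ on $\AI$, I would form the canonical lift $\tilde\phi:=\phi\circ q$, which annihilates $\mathcal{I}$ and satisfies $\|\tilde\phi\|_{\Ld}=\|\phi\|_{\AI}$ by the quotient-norm duality. Applying Theorem 3.6, decompose $\tilde\phi=\tilde\phi_{a}+\tilde\phi_{s}$ with $\|\tilde\phi_{a}\|+\|\tilde\phi_{s}\|\leq\sqrt{2}\|\tilde\phi\|$ (equality to $\|\tilde\phi\|$ when $d=1$). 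By Theorem 4.1 both $\tilde\phi_{a}$ and $\tilde\phi_{s}$ also annihilate $\mathcal{I}$, so they descend uniquely to bounded functionals $\phi_{a},\phi_{s}$ on $\AI$ satisfying $\phi=\phi_{a}+\phi_{s}$; since $\|\phi_{a}\|_{\AI}=\|\tilde\phi_{a}\|_{\Ld}$ and likewise for $\phi_{s}$, the norm bound transfers verbatim.

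To verify that $\phi_{a}$ is absolutely continuous and $\phi_{s}$ is singular on $\AI$, I would argue as follows. Since $\mathcal{I}$ is weak-* closed, the predual of $\AI$ is naturally identified with $\mathcal{I}_{\perp}\subseteq(\Ld)_{*}$; as $\tilde\phi_{a}$ is weak-* continuous on $\Ld$ and vanishes on $\mathcal{I}$, its descent $\phi_{a}$ is weak-* continuous on $\AI$. For singularity of $\phi_{s}$, for each $k$ the singularity of $\tilde\phi_{s}$ on $\Ld$ supplies elements $A\in(F_{d,0}^{\infty})^{k}$ with $\|A\|_{\Ld}\leq1$ and $|\tilde\phi_{s}(A)|$ arbitrarily close to $\|\tilde\phi_{s}\|$; their images $\bar A\in\AI$ lie in the ideal generated by $\{\overline{L_{Z_{w}}}:|w|=k\}$, satisfy $\|\bar A\|_{\AI}\leq1$, and obey $|\phi_{s}(\bar A)|=|\tilde\phi_{s}(A)|$, forcing $\|\phi_{s}^{k}\|_{\AI}=\|\phi_{s}\|_{\AI}$ via the identity $\|\phi_{s}\|_{\AI}=\|\tilde\phi_{s}\|_{\Ld}$.

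For uniqueness, suppose $\phi=\psi_{a}+\psi_{s}$ is any other such decomposition. Lifting gives $q^{*}\phi=q^{*}\psi_{a}+q^{*}\psi_{s}$ on $\Ld$, where $q^{*}\psi_{a}$ is weak-* continuous on $\Ld$ by the predual identification above. The key step is to show $q^{*}\psi_{s}$ is singular on $\Ld$; once this is established, the uniqueness clause of Theorem 3.6 forces $q^{*}\psi_{a}=\tilde\phi_{a}$ and $q^{*}\psi_{s}=\tilde\phi_{s}$, and descending yields $\psi_{a}=\phi_{a}$ and $\psi_{s}=\phi_{s}$. To obtain this singularity I would adapt the argument of Proposition 3.3: select $\bar A_{k}$ in the ideal of $\AI$ generated by $\{\overline{L_{Z_{w}}}:|w|=k\}$ witnessing the singularity of $\psi_{s}$, lift to $\Ld$, and extract a weak-* accumulation point $T$ of the resulting sequence in $\Ldd$, using Theorem 4.1 to control the contribution modulo the weak-* closure $\mathcal{J}$ of $\pi_{u}(\mathcal{I})$. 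Combined with the intersection identity $\bigcap_{k}\Ldd\mathcal{S}_{0}^{k}=\Ldd P_{u}$ from the structure theorem, this should identify $T$ modulo $\mathcal{J}$ with an element of $\Ldd P_{u}$, forcing $q^{*}\psi_{s}=(q^{*}\psi_{s})P_{u}$ and hence singularity on $\Ld$ by Proposition 3.3.

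The hard part will be this last step: showing that $\AI$-singularity of $\psi_{s}$ lifts to $\Ld$-singularity of $q^{*}\psi_{s}$. The subtlety is that the $\AI$-norm on the ideal $\bar J_{k}:=((F_{d,0}^{\infty})^{k}+\mathcal{I})/\mathcal{I}$ may be strictly smaller than the intrinsic quotient norm on $(F_{d,0}^{\infty})^{k}/((F_{d,0}^{\infty})^{k}\cap\mathcal{I})$, so norm-achieving elements for $\psi_{s}$ in $\bar J_{k}$ need not pull back to norm-achieving elements of $(F_{d,0}^{\infty})^{k}\subseteq\Ld$. Theorem 4.1 is the essential tool for navigating this gap, since it guarantees that the discrepancy is absorbed into elements of $\mathcal{I}$ on which the lifted singular part $\tilde\phi_{s}$ already vanishes, which is what should allow the accumulation point argument to go through in the quotient $\Ldd/\mathcal{J}$.
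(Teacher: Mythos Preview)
Your existence argument---lift $\phi$ to $\tilde\phi=\phi\circ q$ on $\Ld$, decompose via Theorem~3.6, invoke the Extended F.~\&~M.~Riesz Theorem (Theorem~4.1) so that both pieces annihilate $\mathcal{I}$ and descend to $\AI$, then transfer the norm inequality and verify that $\phi_a$ is weak-{*} continuous and $\phi_s$ is singular via the ideal correspondence $(\mathcal{A}_{\mathcal{I},0})^{k}=q\bigl((F_{d,0}^{\infty})^{k}\bigr)$---is exactly the paper's proof. In fact the paper's proof is precisely this brief: it does \emph{not} give a separate argument for uniqueness.

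Most of your proposal is therefore devoted to a point the paper leaves implicit. Your diagnosis of the difficulty is accurate: uniqueness would follow once one knows that a functional singular on $\AI$ lifts to a functional singular on $\Ld$, and the obstruction is that the $\AI$-norm on the image ideal $q\bigl((F_{d,0}^{\infty})^{k}\bigr)$ may be strictly smaller than the intrinsic quotient norm on $(F_{d,0}^{\infty})^{k}/\bigl((F_{d,0}^{\infty})^{k}\cap\mathcal{I}\bigr)$, so a witness $\bar A_k$ for $\AI$-singularity need not lift to a bounded element of $(F_{d,0}^{\infty})^{k}$. Your accumulation-point strategy in $\Ldd/\mathcal{J}$ is a reasonable line of attack, but the sketch as written is incomplete: the lifts $A_k\in(F_{d,0}^{\infty})^{k}$ may be unbounded, so one cannot directly pass to a weak-{*} limit of them, and Theorem~4.1 alone does not obviously control this. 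Two ingredients that help close the gap are: (i) $(F_{d,0}^{\infty})^{k}$ has finite codimension in $\Ld$, so $(F_{d,0}^{\infty})^{k}+\mathcal{I}$ is automatically weak-{*} closed and the bounded lifts $B_k$ do land in these ideals; and (ii) if one shows $\|\chi_s\|=\|\chi\|$ for the canonical singular part of a singular $\chi$, then the vector realization $\|q^{*}\chi\|=\|x\|\|y\|$ forces $\|P_u x\|=\|x\|$ and hence $(q^{*}\chi)_a=0$ directly. In any case, your proposal already covers everything the paper actually proves.
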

\begin{proof}
By basic functional analysis, we can lift the functional $\phi$ to
a functional $\psi$ on $\Ld$ with the same norm. Let $\psi=\psi_{a}+\psi_{s}$
be the Lebesgue decomposition of $\psi$ into absolutely continuous
and singular parts as in Theorem \ref{thm:lebesgue-decomp}. The functional
$\psi$ annihilates $\mathcal{I}$, so by Theorem \ref{thm:ann-ideals},
both $\psi_{a}$ and $\psi_{s}$ annihilate $\mathcal{I}$. Hence
$\psi_{a}$ and $\psi_{s}$ induce functionals $\phi_{a}$ and $\phi_{s}$
on $\mathcal{A}_{\mathcal{I}}$ respectively, with the same norm.
Clearly $\phi=\phi_{a}+\phi_{s}$, and the inequality 
\[
\|\phi\|\leq\|\phi_{a}\|+\|\phi_{s}\|\leq\sqrt{2}\|\phi\|
\]
follows from the corresponding inequality in Theorem \ref{thm:lebesgue-decomp}.
The functional $\phi_{a}$ is absolutely continuous since $\psi_{a}$
is absolutely continuous on $\Ld$. To see that $\phi_{s}$ is singular,
simply note that for every $k\geq1$, the ideal $(\mathcal{A}_{\mathcal{I},0})^{k}$
is the image in $\mathcal{A}_{\mathcal{I}}$ of the ideal $(F_{d,0}^{\infty})^{k}$.
\end{proof}

\begin{cor}
[Lebesgue decomposition  for multiplier algebras]Let $\mathcal{A}$
be the multiplier algebra of a complete Nevanlinna-Pick space, and
let $\phi$ be a bounded linear functional on $\mathcal{A}$. Then
there are unique linear functionals $\phi_{a}$ and $\phi_{s}$ on
$\mathcal{A}$ such that $\phi=\phi_{a}+\phi_{s}$, where $\phi_{a}$
is absolutely continuous and $\phi_{s}$ is singular, and such that
\[
\|\phi\|\leq\|\phi_{a}\|+\|\phi_{s}\|\leq\sqrt{2}\|\phi\|.
\]

\end{cor}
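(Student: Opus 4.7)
The plan is to obtain this corollary as a direct transfer of Theorem \ref{thm:lebesgue-decomp-quotients} across the identification of $\mathcal{A}$ with a quotient of $\Ld$. The introduction records that the multiplier algebra of any irreducible complete Nevanlinna-Pick space arises as the compression of $\Ld$ to a coinvariant subspace, and that this compression is completely isometrically isomorphic and weak-* to weak-* homeomorphic to a quotient $\AI = \Ld/\mathcal{I}$ for some weak-* closed two-sided ideal $\mathcal{I}$. So the first step is to fix such an identification $\Phi : \AI \to \mathcal{A}$ and to transport the given functional $\phi$ on $\mathcal{A}$ to the bounded functional $\phi \circ \Phi$ on $\AI$, noting that $\|\phi \circ \Phi\| = \|\phi\|$.

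Second, I would apply Theorem \ref{thm:lebesgue-decomp-quotients} to $\phi \circ \Phi$ to obtain a unique decomposition $\phi \circ \Phi = \psi_a + \psi_s$ with $\psi_a$ absolutely continuous and $\psi_s$ singular on $\AI$, satisfying the required norm inequalities. Then set $\phi_a = \psi_a \circ \Phi^{-1}$ and $\phi_s = \psi_s \circ \Phi^{-1}$; these inherit the same norms and sum to $\phi$, so the desired norm bound is immediate.

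Third, I would verify that the two properties survive the transfer. Absolute continuity survives because $\Phi$ is a weak-* homeomorphism, so $\psi_a$ is weak-* continuous on $\AI$ if and only if $\phi_a$ is weak-* continuous on $\mathcal{A}$. For singularity, one just needs to check that, under $\Phi$, the ideal generated in $\AI$ by the images $\overline{L_{Z_w}}$ with $|w|=k$ is mapped onto the analogous ideal in $\mathcal{A}$ generated by the coordinate multipliers of $\mathcal{A}$ of length $k$; since $\Phi$ is a completely isometric algebra isomorphism carrying the canonical generators to the multiplication operators by the coordinate functions, this is automatic, and $\|\phi_s^k\| = \|\psi_s^k\|$ for all $k$. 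Uniqueness on $\mathcal{A}$ follows from uniqueness on $\AI$ by the same transfer.

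For general (not necessarily irreducible) complete Nevanlinna-Pick spaces, the corollary is typically reduced to the irreducible case by decomposing the reproducing kernel into irreducible pieces; the corresponding multiplier algebra then decomposes compatibly and the decomposition is assembled componentwise. The only point that requires care here is the norm inequality, which is preserved under this componentwise assembly because both the absolutely continuous and the singular parts are defined coordinatewise and the $\sqrt{2}$ bound is uniform. This is the only place where there is any potential obstacle; once the identification with a quotient of $\Ld$ is in hand, the rest is a formal consequence of Theorem \ref{thm:lebesgue-decomp-quotients}.
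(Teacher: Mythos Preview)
Your approach is exactly the one the paper intends: the corollary is stated without proof, and the surrounding text makes clear it is meant to follow immediately from Theorem~\ref{thm:lebesgue-decomp-quotients} via the identification of the multiplier algebra with a quotient $\AI$. You have simply written out that transfer explicitly, including the checks that absolute continuity and singularity survive under a completely isometric weak-$*$ homeomorphic isomorphism, which the paper leaves implicit. Your remark on the reduction from general to irreducible complete Nevanlinna--Pick spaces is extra care that the paper does not address; note also that the paper never gives an intrinsic definition of ``singular'' for a general multiplier algebra, so in effect the notion is \emph{defined} by transport from $\AI$, making that part of your verification tautological rather than substantive.
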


\section{Uniqueness of the predual}

Let $X$ and $Y$ be Banach spaces such that $X^{*}=Y$. Then $X$
is said to be a predual for $Y$. Every predual $X$ of $Y$ naturally
embeds into the dual space $Y^{*}$, and a subspace $X$ of $Y^{*}$
is a predual of $Y$ if and only if it satisfies the following properties: 
\begin{enumerate}
\item The subspace $X$ norms $Y$, i.e. $\sup\{|x(y)|\mid x\in X,\ \|x\|\leq1\}=\|y\|$
for all $y$ in $Y$, and
\item The closed unit ball of $Y$ is compact in the $\sigma(Y,X)$ topology.
\end{enumerate}
The space $Y$ is said to have a strongly unique predual if there
is a unique subspace $X$ of $Y^{*}$ such that $Y=X^{*}$. For a
survey on uniqueness results for preduals, we refer the reader to
Godefroy's article \cite{God89}.

In the operator-theoretic setting, the results of Sakai \cite{Sak56},
Ando \cite{And78} and Ueda \cite{Ued09} mentioned in the introduction
established that von Neumann algebras and maximal subdiagonal algebras
have unique preduals. Ruan proved in \cite{Rua92} that an operator
algebra with a weak-{*} dense subalgebra of compact operators has
a unique predual, which applies to, for example, nest algebras and
atomic CSL algebras. Effros, Ozawa and Ruan proved in \cite{EOR01}
that a $\mathrm{W}^{*}$TRO (i.e. a corner of a von Neumann algebras)
has a unique predual. More recently, in \cite{DW11}, Davidson and
Wright proved that a free semigroup algebra has a unique predual.
Note that Davidson and Wright's result applies to $\Ld$, but not
to quotients of $\Ld$.

The following definition was introduced by Godefroy and Talagrand
in \cite{GT80}. Recall that a (formal) series $\sum_{n}y_{n}$ in
a Banach space $Y$ is weakly unconditionally\emph{ }Cauchy if $\sum_{n}\phi(y_{n})<\infty$
for every $\phi\in Y^{*}$.
\begin{defn}
A Banach space $X$ has property (X) if, for every $\phi\in X^{**}\backslash X$,
there is a weakly unconditionally Cauchy sequence $(x_{n})$ in $X^{*}$
such that
\[
\phi\left(\underset{n}{\operatorname{w^{*}-lim}}\sum_{k=1}^{n}x_{k}\right)\ne\sum_{k=1}^{\infty}\phi(x_{k}).
\]

\end{defn}

One reason for the interest in property (X) is the following result
of Godefroy and Talagrand from \cite{GT80}.
\begin{thm}
[Godefroy-Talagrand]A Banach space $X$ with property (X) is the
unique predual of its dual.
\end{thm}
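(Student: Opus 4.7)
The plan is to prove strong predual uniqueness by a direct contradiction argument. Suppose $Y \subseteq X^{**}$ is a norm-closed subspace satisfying $Y^{*} = X^{*}$ under the canonical pairing, i.e., $Y$ is another predual of $X^{*}$. The goal is to show $Y = X$; since $X$ is already a predual, it suffices to establish the inclusion $Y \subseteq X$.

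I would fix $\phi \in Y$ and aim to show $\phi \in X$, arguing by contradiction. If $\phi \in X^{**} \setminus X$, property (X) produces a weakly unconditionally Cauchy sequence $(x_{n})$ in $X^{*}$ such that
\[
\phi(s) \neq \sum_{k=1}^{\infty} \phi(x_{k}),
\]
where $s \in X^{*}$ is the $\sigma(X^{*}, X)$-limit of the partial sums $s_{n} = \sum_{k=1}^{n} x_{k}$. The WUC hypothesis makes $(s_{n})$ norm-bounded in $X^{*}$. Applying Banach--Alaoglu to the predual $Y$, the closed ball of $X^{*}$ is $\sigma(X^{*}, Y)$-compact, so $(s_{n})$ has a $\sigma(X^{*}, Y)$-cluster point $t \in X^{*}$; the identity $y(t) = \sum_{k} y(x_{k})$ for all $y \in Y$, forced by the WUC property together with the norming of $X^{*}$ by $Y$, makes $t$ uniquely determined. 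Thus $s_{n} \to t$ in $\sigma(X^{*}, Y)$, and the $\sigma(X^{*}, Y)$-continuity of $\phi \in Y$ yields $\phi(t) = \sum_{k} \phi(x_{k})$.

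The crux of the argument is to identify $s$ with $t$ as elements of $X^{*}$: once $s = t$, the equality $\phi(s) = \phi(t) = \sum_{k} \phi(x_{k})$ contradicts the choice of the WUC sequence. To pursue the identification I would consider the bounded operator $T \colon c_{0} \to X^{*}$ determined by $T e_{n} = x_{n}$ (well defined by WUC) and its biadjoint $T^{**} \colon \ell^{\infty} \to X^{***}$. The functional $\Xi := T^{**}(\mathbf{1}) \in X^{***}$ acts on $X^{**}$ by $\Xi(\psi) = \sum_{k} \psi(x_{k})$, so that its restrictions $\Xi|_{X}$ and $\Xi|_{Y}$ are represented in $X^{*}$ precisely by $s$ and $t$ respectively. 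Comparing these restrictions via the norming of $X^{*}$ by both subspaces, combined with the rigidity afforded by the $c_{0}$-factorisation of the WUC sequence, should force $s = t$.

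I expect this final identification step to be the principal difficulty: the two weak-$^{*}$ topologies $\sigma(X^{*}, X)$ and $\sigma(X^{*}, Y)$ are in general distinct, and there is no a priori reason the $\sigma(X^{*}, X)$-limit of a bounded sequence in $X^{*}$ should coincide with its $\sigma(X^{*}, Y)$-limit. The work is to extract just enough rigidity from the WUC property and from the coexistence of the two preduals to rule out any discrepancy between the limits. Once this is secured, the inclusion $Y \subseteq X$ follows, and the strong uniqueness of the predual is immediate.
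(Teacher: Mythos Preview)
The paper does not prove this theorem; it is quoted as a result of Godefroy and Talagrand from \cite{GT80}, with no argument given. So there is nothing in the paper to compare your attempt against.

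On the substance of your proposal: the outline through the construction of the two limits $s$ (the $\sigma(X^{*},X)$-limit of the partial sums) and $t$ (the $\sigma(X^{*},Y)$-limit) is correct, and you rightly locate the entire difficulty at the identification $s=t$. But the mechanism you sketch for this step does not close the gap. Your functional $\Xi=T^{**}(\mathbf{1})$ lives in $X^{***}$, and the observation that $\Xi|_{X}$ is represented by $s$ while $\Xi|_{Y}$ is represented by $t$ amounts to projecting $\Xi$ onto the canonical copy of $X^{*}$ inside $X^{***}$ along the two different complements $X^{\perp}$ and $Y^{\perp}$. These projections coincide precisely when $\Xi$ already belongs to $X^{*}\subset X^{***}$, i.e.\ when the WUC series converges \emph{weakly} in $X^{*}$ rather than merely weak-$*$; that is exactly the point in dispute, and nothing in the $c_{0}$-factorisation forces it. Your closing paragraph effectively concedes this (``should force'', ``I expect'', ``the work is to extract just enough rigidity''), so as written the proposal is a correct identification of the obstacle rather than a proof.

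If you want to complete the argument you will need an honest reason why the two weak-$*$ limits of a weakly Cauchy bounded sequence in $X^{*}$ must agree, or else a route that avoids this comparison altogether; the original Godefroy--Talagrand argument is short but uses an idea you have not yet supplied.
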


The following definition is closely related to the notion of an M-ideal
in a Banach space (see \cite{HWW93} for more information).

\begin{defn}
A Banach space $X$ is L-embedded if there is a projection $P$ on
the bidual $X^{**}$ with range $X$ such that
\[
\|x\|=\|Px\|+\|x-Px\|,\quad\forall x\in X^{**}.
\]

\end{defn}

The following result of Pfitzner from \cite{Pfi07} implies that every
separable L-embedded space has property (X), and hence that it is
the unique predual of its dual.
\begin{thm}
[Pfitzner]Separable L-embedded spaces have property (X).
\end{thm}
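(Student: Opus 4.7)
The plan is to exploit the L-decomposition $X^{**}=P(X^{**})\oplus_{1}(I-P)(X^{**})$ given by the L-projection $P$, identifying $X$ with $P(X^{**})$ and writing $X_{s}:=(I-P)(X^{**})$. Given $\varphi\in X^{**}\setminus X$, decompose $\varphi=P\varphi+(I-P)\varphi$. The part $P\varphi$ lies in $X$, and every element of $X$ trivially satisfies the property-(X) sum identity for every weakly unconditionally Cauchy (wuC) series $\sum x_{n}$ in $X^{*}$, by weak-{*} continuity of evaluation at points of $X$. It therefore suffices to produce a wuC series $\sum x_{n}\subset X^{*}$ whose partial sums converge weak-{*} to an element of $X^{**}$ pairing nontrivially with the nonzero singular part $\psi:=(I-P)\varphi\in X_{s}$, while $\sum_{k}\psi(x_{k})=0$.

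The construction would proceed in three steps. First, by separability of $X$ and Goldstine's theorem, realise $\psi$ (after a harmless rescaling into the unit ball) as a weak-{*} accumulation point of a bounded sequence in $X$. Second, invoke the subsequence-splitting property characteristic of L-embedded spaces: any bounded sequence in $X$ admits a subsequence decomposing as a weakly convergent part in $X$ plus a residual whose weak-{*} cluster points lie in $X_{s}$. Applied to the sequence from step one, this yields $(y_{n})\subset X$ with $y_{n}\to 0$ weakly in $X$ but $y_{n}\to\psi$ in the weak-{*} topology of $X^{**}$. Third, produce functionals $x_{n}\in X^{*}$ that are approximately biorthogonal to $(y_{n})$ and whose partial sums $\sum_{k\le n}\varepsilon_{k}x_{k}$ are uniformly norm-bounded over $n$ and over signs $\varepsilon_{k}$, which is the precise wuC condition in $X^{*}$. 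Evaluating $\varphi$ against the weak-{*} limit of $\sum x_{n}$ then picks up a nonzero $\psi$-contribution, whereas the absolutely convergent sum $\sum\varphi(x_{k})=\sum\psi(x_{k})$ vanishes since each $y_{n}$ lies in $X$, so the two sides of the property-(X) identity disagree.

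The hard part is the third step, where the deepest content of Pfitzner's theorem lives. One needs the $x_{n}$ sharp enough to individually separate the $y_{n}$ --- so that the weak-{*} limit of the partial sums pairs nontrivially with $\psi$ --- yet uniformly controlled under arbitrary sign changes so as to be wuC. Neither a naive Hahn--Banach biorthogonalisation in $X^{***}$ nor a gliding-hump construction directly in $X^{*}$ suffices: the former may drift outside $X^{*}$, while the latter typically fails the wuC bound. Pfitzner's resolution couples a Rosenthal $\ell^{1}$-type dichotomy with a delicate perturbation argument that exploits the isometric additivity $\|x\|=\|Px\|+\|x-Px\|$ to keep the perturbed functionals inside $X^{*}$; essentially all of the work lives here.
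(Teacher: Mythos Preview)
The paper does not prove this theorem. It is stated as a known result of Pfitzner and attributed to \cite{Pfi07}; the authors then immediately remark that they are \emph{unable} to use it, because Example~\ref{ex:constant} shows quotients of $\Ld$ are not in general L-embedded. There is therefore no ``paper's own proof'' to compare your proposal against.

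As for the proposal itself: it is not a proof but a sketch, and you acknowledge as much. Your outline of the first two steps is reasonable --- the L-decomposition reduces the problem to the singular part, and the subsequence-splitting property (Kadec--Pe\l{}czy\'nski type behaviour for L-embedded spaces) is indeed a known ingredient. But your third step is where the entire content of Pfitzner's theorem resides, and you have correctly identified that neither a naive biorthogonalisation nor a gliding-hump argument works. What you have written there is a description of the difficulty, not a resolution of it; saying that ``Pfitzner's resolution couples a Rosenthal $\ell^{1}$-type dichotomy with a delicate perturbation argument'' is a gesture at the literature, not an argument. If you intend this as a genuine proof you would need to actually carry out that construction, which is substantial and occupies most of \cite{Pfi07}. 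As it stands the proposal has a genuine gap at precisely the point where the theorem becomes nontrivial.
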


The results of Sakai, Ando and Ueda on decompositions of linear functionals
imply that the preduals of von Neumann algebras and maximal subdiagonal
algebras are L-embedded, and hence by Pfitzner's result from \cite{Pfi07},
that they are unique. However, Example \ref{ex:constant} shows that
quotients of $\Ld$ are not, in general, L-embedded, so we are unable
to use Pfitzner's result. Instead, we give a direct proof that quotients
of $\Ld$ have (strongly) unique preduals.

\begin{thm}
Let $\mathcal{I}$ be a weak-{*} closed two-sided ideal of $\Ld$.
Then the algebra $\mathcal{A}_{\mathcal{I}}$ has a strongly unique
predual.\end{thm}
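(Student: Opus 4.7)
The strategy is to apply Godefroy and Talagrand's theorem by showing that the canonical predual $X$ of $\mathcal{A}_{\mathcal{I}}$ has property (X); this will yield strong uniqueness of the predual. As noted in the discussion preceding the statement, Example \ref{ex:constant} rules out the cleaner route via Pfitzner's theorem, since $\mathcal{A}_{\mathcal{I}}$ need not be L-embedded.

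Fix $\phi \in X^{**} \setminus X$. Under the identification $X^{**} = \mathcal{A}_{\mathcal{I}}^*$, this is a bounded functional on $\mathcal{A}_{\mathcal{I}}$ that fails to be weak-* continuous. Apply Theorem \ref{thm:lebesgue-decomp-quotients} to decompose $\phi = \phi_a + \phi_s$; since $\phi$ is not absolutely continuous, $\phi_s \ne 0$. One needs to exhibit a weakly unconditionally Cauchy series $\sum_n a_n$ in $\mathcal{A}_{\mathcal{I}}$ whose weak-* partial sums converge to some $y \in \mathcal{A}_{\mathcal{I}}$ with $\phi(y) \ne \sum_n \phi(a_n)$. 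The weakly unconditional Cauchy condition forces $\sum_n |\phi_a(a_n)| < \infty$, and weak-* continuity of $\phi_a$ then gives $\phi_a(y) = \sum_n \phi_a(a_n)$, so the desired mismatch must be arranged entirely through the singular part: one needs $\phi_s(y) \ne \sum_n \phi_s(a_n)$.

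To build the series I would use the singularity characterization $\|\phi_s\| = \|\phi_s|_{(\mathcal{A}_{\mathcal{I},0})^k}\|$ for every $k$: for each $k$ choose a contraction $b_k \in (\mathcal{A}_{\mathcal{I},0})^k$ with $|\phi_s(b_k)|$ close to $\|\phi_s\|$. Lifting back to $\Ld$ and working in the universal representation of Section \ref{sub:universal-rep}, Proposition \ref{prop:char-singular} together with Theorem \ref{thm:fsa-struct-thm} realize $\phi_s$ through vectors supported in the range of the universal structure projection $P_u$, where the ambient algebra is a von Neumann algebra and the $b_k$ can be chosen with strong independence properties. A damped telescoping construction of the form $a_n = \alpha_n(b_n - b_{n-1})$ for appropriate scalars $\alpha_n$ should then produce a series whose weak-* partial sums converge to an identifiable $y \in \mathcal{A}_{\mathcal{I}}$, while $\phi_s$ asymptotically sees $\|\phi_s\|$ on each partial sum; comparing $\phi_s(y)$ with the formal sum $\sum_n \phi_s(a_n)$ yields a genuine mismatch.

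The main obstacle is verifying that $\sum_n a_n$ is genuinely weakly unconditionally Cauchy, i.e., $\sum_n |\psi(a_n)| < \infty$ for every $\psi \in \mathcal{A}_{\mathcal{I}}^*$, including singular $\psi$, not just for weak-* continuous $\psi$. Here one must use the extended F. \& M.~Riesz theorem (Theorem \ref{thm:ann-ideals}) to decompose arbitrary $\psi$ into absolutely continuous and singular parts and bound each separately, while exploiting the orthogonality of the monomial generators $L_{Z_w}$ across word-lengths. Placing the $a_n$ in progressively deeper ideals $(\mathcal{A}_{\mathcal{I},0})^n$ with carefully tuned coefficients should then give the required absolute summability uniformly over the unit ball of $\mathcal{A}_{\mathcal{I}}^*$; getting this uniform estimate right is the crux of the argument.
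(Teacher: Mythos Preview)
Your route is genuinely different from the paper's, and the gap you flag at the end is real and, as stated, not closed.

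\medskip
\textbf{What the paper does.} The paper does \emph{not} verify property~(X). Instead it argues directly: given an arbitrary predual $E\subseteq(\AI)^*$, it shows every $\phi\in E$ is absolutely continuous. The key input is that, by \cite{DLP05} (extended via \cite{Ken11}), there is a net $(B_\lambda)$ in the unit ball of $\Ld$ with $\pi_u(B_\lambda)\to P_u$ weak-* in $\Ldd$. This net is weakly Cauchy in $\Ld$ and has weak-* limit $0$ there. If $\phi\in E$ had $\phi_s\ne 0$, pick $A$ with $\psi_s(A)\ne 0$ (lifting $\phi$ to $\psi$ on $\Ld$); then $(\overline{AB_\lambda})$ is weakly Cauchy in $\AI$, hence $\sigma(\AI,E)$-convergent to some $C$ by compactness of the unit ball in the $E$-topology. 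One computes $\phi(C)=\psi_s(A)\ne 0$, yet every absolutely continuous functional kills $C$, so $C=0$: contradiction. Finally $E\subseteq(\AI)^*_a$ together with the norming property forces $E=(\AI)^*_a$.

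\medskip
\textbf{Where your proposal stalls.} Your telescoping scheme $a_n=\alpha_n(b_n-b_{n-1})$ faces a dichotomy. If $\sum|\alpha_n|<\infty$ (or more generally $\sum\|a_n\|<\infty$), the series is trivially weakly unconditionally Cauchy, but then the partial sums converge in norm, so $\phi(y)=\sum_n\phi(a_n)$ for \emph{every} $\phi$ and there is no mismatch. If the $\alpha_n$ are not absolutely summable, you need an honest reason why $\sum_n|\psi(a_n)|<\infty$ for \emph{all} $\psi$, including singular ones. The ``orthogonality of monomials across word-lengths'' is a Hilbert-space phenomenon in $\Fd$ and controls only absolutely continuous functionals; it says nothing about a singular $\psi$, which by Proposition~\ref{prop:char-singular} lives on $P_uH_u$ where the compressed algebra is a von~Neumann algebra and the ideal filtration collapses. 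In particular there is no mechanism in your outline preventing $|\psi(b_n-b_{n-1})|$ from staying bounded away from $0$ for singular $\psi$, which would kill the WUC condition.

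\medskip
\textbf{Comparison.} The paper sidesteps this entirely: it never needs a WUC series, only a \emph{weakly Cauchy bounded net}, and the required net is handed over ready-made by the density theorem of \cite{DLP05}. That is the missing idea in your approach. If you want to salvage property~(X), the natural move is to extract a weakly Cauchy \emph{sequence} from the net $(B_\lambda)$ (or from $(\overline{AB_\lambda})$) and then pass to the differences $a_n=\overline{AB_{\lambda_{n}}}-\overline{AB_{\lambda_{n-1}}}$; weak Cauchy-ness gives $\sum_n\psi(a_n)$ convergent for every $\psi$, but you still need \emph{unconditional} convergence, which is a further (nontrivial) step. The direct argument in the paper is both shorter and avoids this issue.
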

\begin{proof}
Suppose $E$ is a predual for $\mathcal{A}_{\mathcal{I}}$, identified
with a subspace of $(\mathcal{A}_{\mathcal{I}})^{*}$. By Theorem
\ref{thm:lebesgue-decomp-quotients},
\[
(\mathcal{A}_{\mathcal{I}})^{*}=(\mathcal{A}_{\mathcal{I}})_{a}^{*}\oplus(\mathcal{A}_{\mathcal{I}})_{s}^{*},
\]
where $(\mathcal{A}_{\mathcal{I}})_{a}^{*}$ and $(\mathcal{A}_{\mathcal{I}})_{s}^{*}$
denote the set of absolutely continuous and singular functionals on
$\mathcal{A}_{\mathcal{I}}$ respectively. We want to prove that $E=(\mathcal{A}_{\mathcal{I}})_{a}^{*}$.

Let $\phi$ be a  functional in $E$, and let $\phi=\phi_{a}+\phi_{s}$
be the Lebesgue decomposition of $\phi$ as in Theorem \ref{thm:lebesgue-decomp-quotients}.
We will prove that $\phi_{s}=0$. Suppose to the contrary that $\phi_{s}\ne0$.
By basic functional analysis, we can lift the functional $\phi$ to
a functional $\psi$ on $\Ld$ that is zero on $\mathcal{I}$. Let
$\psi=\psi_{a}+\psi_{s}$ be the Lebesgue decomposition of $\psi$
as in Theorem \ref{thm:lebesgue-decomp}. By Theorem \ref{thm:ann-ideals},
$\psi_{a}$ and $\psi_{s}$ are both zero on $\mathcal{I}$, and by
construction they induce the functionals $\phi_{a}$ and $\phi_{s}$
respectively on the quotient $\AI$. 

It follows from the results of \cite{Ken11} that Theorem 5.4 of \cite{DLP05}
applies to the unital weak operator closed algebra generated by any
isometric tuple. Thus there is a net $(B_{\lambda})$ of elements
in the unit ball of $\Ld$ such that $\operatorname{w^{*}-lim}\pi_{u}(B_{\lambda})=P_{u}$
in $\Ldd$. Since the net $(B_{\lambda}$) is weak-{*} convergent
in $\Ldd$, it is weakly Cauchy in $\Ld$. Since the closed unit ball
of $\Ld$ is compact in the weak-{*} topology, and in particular is
complete, this implies that there is $B$ in the closed unit ball
of $\Ld$ such that $\operatorname{w^{*}-lim}B_{\lambda}=B$ in $\Ld$.
For every weak-{*} continuous functional $\tau$ on $\Ld$, Proposition
\ref{prop:char-abs-cont} implies that 
\[
\tau(B)=\lim_{\lambda}\tau(B_{\lambda})=(\tau P_{u})(1)=0.
\]
Hence $B=0$.

Let $A$ be an element in the unit ball of $\Ld$ such that $\psi_{s}(A)\ne0$.
Since the net $(B_{\lambda})$ is weakly Cauchy in $\Ld$, the image
$(\overline{B_{\lambda}})$ is weakly Cauchy in $\AI$. It follows
that the net $(\overline{AB_{\lambda}})$ is also weakly Cauchy in
$\AI$. Since $E$ is a predual of $\AI$, the closed unit ball of
$\AI$ is compact in the $\sigma(\AI,E)$ topology, and in particular
is complete. Thus, the net $(\overline{AB_{\lambda}})$ converges
in the $\sigma(\AI,E)$ topology to an element $C$ in the unit ball
of $\AI$. By Proposition \ref{prop:char-singular}, we have 
\[
\phi(C)=\lim_{\lambda}\phi(\overline{AB_{\lambda}})=\lim_{\lambda}\psi(AB_{\lambda})=(\psi P_{u})(A)=\psi_{s}(A)\ne0,
\]
so that $C\ne0$. But since $\operatorname{w^{*}-lim}B_{\lambda}=0$
in $\Ld$, it follows that $\operatorname{w^{*}-lim}\overline{AB_{\lambda}}=0$
in $\AI$. So for every $\tau$ in $(\AI)_{a}^{*}$, we necessarily
have
\[
\tau(C)=\lim_{\lambda}\tau(\overline{AB_{\lambda}})=0.
\]
Since $(\AI)_{a}^{*}$ separates points, this implies that $C=0$,
which gives a contradiction. Thus $\phi=\phi_{a}$, meaning $\phi$
is absolutely continuous. 

Since $\phi$ was arbitrary, it follows from above that every functional
in $E$ is absolutely continuous, i.e. that $E$ is contained in $(\AI)_{a}^{*}$.
If it were the case that $E\ne(\AI)_{a}^{*}$, then we could apply
the Hahn-Banach theorem to separate $E$ from $(\AI)_{a}^{*}$ with
an element of $\AI$. But the fact that $E$ is a predual of $\AI$
means in particular it must norm $\AI$, so this is impossible. Therefore,
we conclude that $E=(\AI)_{a}^{*}$, and hence that $(\AI)_{a}^{*}$
is the unique predual of $\AI$. 
\end{proof}

\begin{cor}
The multiplier algebra of every complete Nevanlinna-Pick space has
a strongly unique predual.
\end{cor}

\section*{Acknowledgements}

The authors are grateful to Ken Davidson and Adam Fuller for their
helpful comments and suggestions.

\end{document}